\theoremstyle{plain}
\newtheorem{lemma}{Lemma}[section]
\newtheorem{theorem}[lemma]{Theorem}
\newtheorem{remark}[lemma]{Remark}
\newtheorem{corollary}[lemma]{Corollary}
\theoremstyle{definition}
\newtheorem{example}[lemma]{Example}
\newtheorem{definition}[lemma]{Definition}
\theoremstyle{remark}
\newtheorem*{ack}{Acknowledgements}
\newcommand\rr{\mathbb{R}}
\numberwithin{equation}{section}
\begin{document}
\title[Stability of the positive mass theorem]{Stability of the positive mass theorem for graphical hypersurfaces of Euclidean space}
\author{Lan-Hsuan Huang}
\address{Department of Mathematics, University of Connecticut, Storrs, CT 06269, USA}
\email{lan-hsuan.huang@uconn.edu}
\author{Dan A. Lee}
\address{CUNY Graduate Center and Queens College}
\email{dan.lee@qc.cuny.edu}
\thanks{The first author was partially supported by the NSF through DMS-1301645 and DMS-1308837. This material is also based upon work supported by the NSF under Grant No.~0932078~000, while both authors were in residence at the Mathematical Sciences Research Institute in Berkeley, California, during the Fall 2013 program in Mathematical General Relativity.}
\date{}
\begin{abstract}
The rigidity of the positive mass theorem states that the only complete asymptotically flat manifold of nonnegative scalar curvature and \emph{zero} mass is Euclidean space. We prove a corresponding stability theorem for spaces that can be realized as graphical hypersurfaces in $\mathbb{R}^{n+1}$. Specifically, for an asymptotically flat graphical hypersurface $M^n\subset \mathbb{R}^{n+1}$ of nonnegative scalar curvature (satisfying certain technical conditions), there is a horizontal hyperplane $\Pi\subset \mathbb{R}^{n+1}$ such that the flat distance between $M$ and $\Pi$ in any ball of radius $\rho$ can be bounded purely in terms of $n$, $\rho$, and the mass of $M$. In particular, this means that if the masses of a sequence of such graphs approach zero, then the sequence weakly converges  (in the sense of currents, after a suitable vertical normalization) to a flat plane in $\mathbb{R}^{n+1}$. This result generalizes some of the earlier findings of the second author and C. Sormani \cite{Lee-Sormani:2014} and provides some evidence for a conjecture stated there.
\end{abstract}
\maketitle

\section{Introduction}
The positive mass theorem states that any complete asymptotically flat manifold of nonnegative scalar curvature has nonnegative mass. Furthermore, if the mass is zero, then the manifold must be Euclidean space. The second statement may be thought of as a rigidity theorem, and it is natural to consider the \emph{stability} of this rigidity statement. That is, if the mass is small, must the  manifold be ``close'' to Euclidean space in some sense?  Or put another way, which geometric features of the manifold can be bounded by the mass? 

One difficulty that arises in the study of stability is that the mass cannot control the geometry of a region that is separated from infinity by a minimal hypersurface. In other words, the mass cannot ``see'' the geometry behind an apparent horizon. One obvious approach to this problem is to only consider the exterior region of the manifold lying outside the outermost minimal hypersurface. However, even in the absence of minimal hypersurfaces, one can have arbitrarily deep ``gravity wells'' that make small contributions to the mass. (See \cite{Lee-Sormani:2014} for details.)  These examples show that the rigidity of the positive mass theorem cannot be stable with respect to Gromov-Hausdorff convergence, much less than any sort of smooth convergence.

Various types of stability results have appeared in the literature. In three dimensions, H.~Bray and F.~Finster~\cite{Bray-Finster:2002} used spinor methods to show that \emph{if} a sequence of smooth asymptotically flat metrics of nonnegative scalar curvature is already known to converge to a smooth limit in such a way that the mass approaches zero, and such that there are uniform bounds on the curvature and the isoperimetric constant, then the limit space must be Euclidean space. This was generalized to higher dimensional spin manifolds  by Finster and I.~Kath~\cite{Finster-Kath:2002}. Finster~\cite{Finster:2009} also obtained an upper bound on the $L^2$-norm of the curvature tensor, in terms of mass, with the exception of a set of small surface area. J.~Corvino~\cite{Corvino:2005} proved that a particular bound on the mass by the global maximum of the sectional curvature implies that the manifold is topologically trivial.  Under the assumption of conformal flatness and zero scalar curvature outside a compact set, the second author ~\cite{Lee:2009} proved that if a sequence of smooth asymptotically flat metrics of nonnegative scalar curvature has mass approaching zero, then the sequence converges smoothly to the Euclidean metric in a region outside a compact set. These various results address the stability question in the region of the manifold where the curvature tensor is small in some norm.

Another aspect of the stability problem is to understand what happens in the region of a manifold of small mass where the curvature tensor may be large.  As mentioned above, since it is known that the positive mass theorem is not stable with respect to Gromov-Hausdorff convergence, it is not clear what the optimal convergence should be. Recently, the second author and C.~Sormani~\cite{Lee-Sormani:2012, Lee-Sormani:2014} proved a stability result with respect to Sormani and S.~Wenger's ``intrinsic flat'' convergence, for the case of spherically symmetric manifolds (discussed below). 

The main purpose of this paper is to study this stability problem in a more general setting without spherical symmetry. Note that a spherically symmetric manifold can be isometrically embedded in Euclidean space as a hypersurface. This paper considers more general  asymptotically flat manifolds of nonnegative scalar curvature which can be isometrically embedded in Euclidean space as asymptotically flat graphical hypersurfaces.  In this setting, G.~Lam \cite{Lam} gave a direct proof that the ADM mass is nonnegative. The first author and D.~Wu \cite{Huang-Wu:2013} generalized it to asymptotically flat hypersurfaces which are graphical outside a compact set, and they also proved rigidity: if the mass is zero, then the hypersurface must be a hyperplane. 

As stated above, any global stability must be with respect to some sort of weak topology.
Following the work of \cite{Lee-Sormani:2014}, we choose to use the Federer-Fleming's flat topology on currents in Euclidean space. See Section \ref{flat-norm} for the definition.

\begin{theorem}\label{th:main}
Let $n\ge 5$. Let $M_i$ be a sequence of $C^{n+1}$ asymptotically flat graphs of nonnegative scalar curvature in $\rr^{n+1}$, either entire or with minimal boundary. Assume that almost every level set of $M_i$ is strictly mean-convex and outward-minimizing in the hyperplane. Normalize the height so that the level set $M_i \cap \{x^{n+1}=0\}$ has volume equal\footnote{To be precise, we normalize so that $h_0$, defined in Definition \ref{de:horizon-height}, is zero.} to $ 2\omega_{n-1} (2m)^{\frac{n-1}{n-2}}$. 

If the limit of masses of the $M_i$'s is zero, then $M_i$ weakly converges to $\{x^{n+1}=0\}$ in the sense of currents.

For $n=3$ or $4$, if we make the additional assumption that the sequence is uniformly $(r_0,\gamma,\alpha)$-asymptotically Schwarzschild (see Definition \ref{de:asymptotically-Schwarzschild}) for some choice of $(r_0, \gamma, \alpha)$ with $\alpha<0$, then we obtain the the same consequence.
\end{theorem}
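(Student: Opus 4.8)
The plan is to show that, after the stated vertical normalization, the graphing functions $f_i$ --- so that $M_i=\operatorname{graph}(f_i)$ over $\rr^n\setminus\Omega_i$, where $\Omega_i$ is the bounded region enclosed by the minimal boundary (with $\Omega_i=\varnothing$ in the entire case) --- satisfy (a) $|\Omega_i|+|\partial\Omega_i|\to0$, and (b) $\sup_{\{|x|\le R\}\setminus\Omega_i}|f_i|\to0$ for every $R>0$ (and, when $n\ge5$, $\sup_{\rr^n\setminus\Omega_i}|f_i|\to0$). Granting (a) and (b), flat convergence is soft: in a cylinder $C_R=B^n_R\times\rr\subset\rr^{n+1}$, the integral current obtained by restricting ``$M_i-\{x^{n+1}=0\}$'' to $C_R$ differs, up to the lower-dimensional pieces lying over $\partial B^n_R$ and over $\Omega_i$, from the boundary of the region lying between $M_i$ and the hyperplane $\{x^{n+1}=0\}$; that region has $(n{+}1)$-mass $\le|B^n_R|\cdot\sup_{B^n_R\setminus\Omega_i}|f_i|\to0$, the cylindrical piece over $\Omega_i$ has mass $\le|\Omega_i|\cdot\sup|f_i|\to0$, and for a.e.\ $R$ (Fubini and Chebyshev, after passing to a subsequence and invoking the subsequence principle) the piece over $\partial B^n_R$ has mass $\to0$. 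So the flat distance in $C_R$ between $M_i$ and $\{x^{n+1}=0\}$ tends to $0$ for a.e.\ $R$, hence for all $R$. The normalization is what makes the limiting plane $\{x^{n+1}=0\}$.

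For (a): by the Lam--Huang--Wu mass formula,
\[
  2(n-1)\omega_{n-1}m_i \;=\; \int_{\rr^n\setminus\Omega_i}\frac{R_{g_i}}{\sqrt{1+|Df_i|^2}}\,dV_{g_i}\;+\;\int_{\partial\Omega_i}H^{\rr^n}_{\partial\Omega_i}\,d\mathcal H^{n-1},
\]
and under our hypotheses both terms are nonnegative ($R_{g_i}\ge0$; the horizon, being a level set, is mean-convex in its hyperplane). Hence $\int_{\partial\Omega_i}H^{\rr^n}_{\partial\Omega_i}\,d\mathcal H^{n-1}\le 2(n-1)\omega_{n-1}m_i$, so the Minkowski inequality for the mean-convex, outward-minimizing hypersurface $\partial\Omega_i$ gives $|\partial\Omega_i|^{(n-2)/(n-1)}\le C_n m_i$, and the isoperimetric inequality gives $|\Omega_i|\to0$. (The normalization in any case forces $|\partial\Omega_i|$ to lie below a fixed multiple of $m_i^{(n-1)/(n-2)}$; in the entire case this step is vacuous.)

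Part (b) is the heart. Slice $\rr^n\setminus\Omega_i$ by the level sets $\Sigma_t=\{f_i=t\}$, which for a.e.\ $t$ are regular, strictly mean-convex, outward-minimizing hypersurfaces in their hyperplanes. Rewriting the scalar-curvature integral over the parameter $t$ by the coarea formula, and using the Gauss equation on $M_i$ together with the Minkowski inequality $\int_{\Sigma_t}H^{\rr^n}_{\Sigma_t}\,d\mathcal H^{n-1}\ge(n-1)\omega_{n-1}^{1/(n-1)}|\Sigma_t|^{(n-2)/(n-1)}$ on each leaf (this is where the mean-convex, outward-minimizing hypothesis is used), one shows that a Geroch-type quasi-local mass $\mathcal Q(\Sigma_t)$ --- a weighted combination of $|\Sigma_t|$, the total mean curvature of $\Sigma_t$ in its hyperplane, and $|Df_i|$, reducing to the Schwarzschild mass on concentric round leaves and tending to the ADM mass at infinity --- is nondecreasing in $t$. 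Thus $\mathcal Q(\Sigma_t)\le m_i$ for a.e.\ $t$, and this bounds the steepness of the graph: in a suitably averaged sense over $\Sigma_t$, $|Df_i|^2\le c_n m_i|\Sigma_t|^{-(n-2)/(n-1)}\bigl(1-c_n m_i|\Sigma_t|^{-(n-2)/(n-1)}\bigr)^{-1}$, the Schwarzschild profile of mass $m_i$. Integrating $|Df_i|$ across the foliation --- equivalently, writing $\operatorname{osc}(f_i)=\int(dA/dt)^{-1}\,dA$ with $A=|\Sigma_t|$, $dA/dt=\int_{\Sigma_t}H^{\rr^n}_{\Sigma_t}/|Df_i|\,d\mathcal H^{n-1}$, and inserting the Minkowski lower bound on $dA/dt$ --- bounds $\sup_{\{|x|\le R\}\setminus\Omega_i}|f_i|$ by the oscillation over $\{|x|\le R\}$ of the Schwarzschild graph of mass $m_i$ (using (a) and the normalization for the absolute level). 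A direct computation of the latter shows it is $\le C_n m_i^{1/(n-2)}$ uniformly in $R$ when $n\ge5$, whereas for $n=3,4$ it grows with $R$ (like $\log R$ when $n=4$, like $\sqrt{R}$ when $n=3$) yet still tends to $0$ as $m_i\to0$ for each fixed $R$. This is precisely why $n\ge5$ is clean; for $n=3,4$ the uniform $(r_0,\gamma,\alpha)$-asymptotically-Schwarzschild hypothesis with $\alpha<0$ supplies the missing control, reducing matters to the fixed region $\{|x|\le r_0\}$ with the far region handled by direct comparison with a Schwarzschild graph of mass $m_i$.

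The step I expect to be the main obstacle is part (b): producing the monotone quantity $\mathcal Q$ for the level-set foliation of a graph with error terms of the right sign (so that $R_{g_i}\ge0$ and the leafwise Minkowski inequality together force $\mathcal Q'\ge0$), and then honestly converting $\mathcal Q\le m_i$ into the height bound --- in particular handling the neighborhood of the horizon, where $|Df_i|$ blows up but contributes only a convergent improper integral of size $\sim m_i^{1/(n-2)}$, and the non-constancy of $|Df_i|$ along the leaves. Given this, part (a) and the passage to flat convergence are routine.
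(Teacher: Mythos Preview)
Your overall architecture is right, and your instincts about the height bound and the role of Minkowski are sound, but claim (b) as stated is false and your flat-convergence argument depends on it. You assert $\sup_{\{|x|\le R\}\setminus\Omega_i}|f_i|\to0$, i.e.\ a two-sided height bound. Only the \emph{upper} bound $\sup f_i-h_0\to0$ is obtainable from the monotonicity/ODE argument; there is no matching lower bound, because graphs with small mass can have arbitrarily deep ``gravity wells'' (the paper's introduction flags exactly this phenomenon). Consequently your volume estimate for the region between $M_i$ and $\{x^{n+1}=0\}$, namely $|B^n_R|\cdot\sup_{B^n_R\setminus\Omega_i}|f_i|$, need not go to zero. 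The paper repairs this by splitting the in-between region as $B=B_+ + B_-$, where $B_+$ lies above height $h_0$ and $B_-$ below. Your argument correctly controls $B_+$ via the upper height bound. For $B_-$ one slices at each height $h<h_0$: the slice is $\Omega_h$, which by monotonicity of $V$ and the definition of $h_0$ has perimeter $V(h)\le V(h_0)\le 2\omega_{n-1}(2m)^{(n-1)/(n-2)}$, hence volume $\le c(n)m^{n/(n-2)}$ by the isoperimetric inequality; integrating over $h\in(-\rho,0)$ gives $\mathbf{M}_U(B_-)\le c(n)\rho\,m^{n/(n-2)}$. So the wells are allowed to be deep, provided they are thin --- and thinness is exactly what the normalization $h_0=0$ guarantees.

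On the height bound itself, your sketch is close in spirit but vaguer than necessary. The paper does not build a Geroch-type monotone $\mathcal Q$; it simply uses Lam's exact identity $2(n-1)\omega_{n-1}m=\int_{\rr^n\setminus\Omega_h}R+\int_{\Sigma_h}\frac{|Df|^2}{1+|Df|^2}H_{\Sigma_h}$ to get $\int_{\Sigma_h}\frac{|Df|^2}{1+|Df|^2}H_{\Sigma_h}\le C_n m$ for every regular $h$. The passage from this to a lower bound on $V'(h)=\int_{\Sigma_h}H_{\Sigma_h}/|Df|$ is the honest technical step you were worried about: one cannot simply ``average'' $|Df|$ along the leaf. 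The paper's device is to fix a threshold $\alpha>0$, throw away $\Sigma_h\cap\{|Df|\ge\alpha\}$ (where the mass bound controls $\int H$), keep $\Sigma_h\cap\{|Df|<\alpha\}$ (where $1/|Df|>1/\alpha$), insert Minkowski, and then \emph{optimize over} $\alpha$. This yields a clean differential inequality $V'>F(V)$ with $F$ explicit, and ODE comparison against the solution with initial value $2\omega_{n-1}(2m)^{(n-1)/(n-2)}$ shows $V$ blows up within height $Cm^{1/(n-2)}$ when $n\ge5$. For $n=3,4$ the same ODE only gives $h-h_0\lesssim\sqrt{m}\,V(h)^{1/4}$ or $\sqrt{m}\log V(h)$, and a separate Schwarzschild barrier (strong maximum principle for the scalar curvature operator) together with the uniform $(r_0,\gamma,\alpha)$ hypothesis is used to control $f$ outside a ball whose radius depends on $m$; this is more than ``direct comparison in the far region'' and is where $\alpha<0$ enters.
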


See Theorems \ref{th:geq5} and \ref{th:lower-dimension} for more precise statements involving flat distance, in particular, an explicit bound on the flat distance (in a ball) between $M_i$ and $\{ x^{n+1} = 0\}$ in terms of the ADM  mass. All of the relevant definitions in Theorem \ref{th:main} appear in Section \ref{background}. 

Although the assumption that the level sets are strictly mean-convex and outward-minimizing is undesirable, it is not as restrictive as it might first appear, since our other hypotheses imply that the smooth level sets must be \emph{weakly} mean-convex (see Theorem \ref{th:mean-convexity}). Note that since the assumption is satisfied whenever the level sets are convex, it is possible to construct many examples of spaces satisfying this hypothesis: First start with a spherically symmetric asymptotically flat metric of nonnegative scalar curvature, isometrically embedded into $\rr^{n+1}$, and then perturb it slightly in any region where the scalar curvature is strictly positive.  The main reason that we require the level sets to be strictly mean-convex and outward-minimizing is the use of the Minkowski inequality for the mean curvature integral by G.~Huisken~\cite{Huisken} and by A.~Freire and F.~Schwartz~\cite{Freire-Schwartz:2014}.

The extra assumption in dimensions $3$ and $4$ is a uniformity assumption on the asymptotics of the $M_i$'s. Although this sort of assumption is a reasonable one for our stability theorem, we do not know if it is necessary. One reason why the lower dimensional case is more difficult is that asymptotically flat graphs in low dimension are not asymptotic to planes; they are unbounded at infinity.

Theorem \ref{th:main} fits together well with earlier work of the second author and Sormani \cite{Lee-Sormani:2014}.
They conjectured that the rigidity of the positive mass theorem might be stable with respect to Sormani-Wenger convergence. That is, they suggested that  a sequence of complete asymptotically flat manifolds of nonnegative scalar curvature (possibly with outermost minimal boundary) with masses approaching zero should converge to Euclidean space in the pointed Sormani-Wenger topology. The Sormani-Wenger distance between Riemannian manifolds (or more generally, between integral current spaces) is an ``intrinsic'' version of the flat distance between integral currents in Euclidean space, in essentially the same sense that Gromov-Hausdorff distance is an intrinsic version of the Hausdorff distance between subsets of Euclidean space. (See  \cite{Sormani-Wenger:2011} for details.) The second author and Sormani proved that the conjecture holds for spherically symmetric spaces, thereby establishing a proof-of-concept in a simple test case \cite{Lee-Sormani:2014}, and they proved an analogous result for the Penrose inequality~\cite{Lee-Sormani:2012} in the class of spherically symmetric spaces. A compactness result in this setting is obtained in \cite{LeFloch-Sormani:2014}. 

Although the Sormani-Wenger distance is an intrinsic version of the usual flat distance, it is important to note that they do not agree, even for submanifolds of Euclidean space.  Because of this, our results do not generalize the results of \cite{Lee-Sormani:2014}, but they are in the same spirit and may be regarded as evidence for the main conjecture in \cite{Lee-Sormani:2014}.

We summarize our approach as follows: We want to show that in any large fixed ball, a graph of small mass is close to a plane in flat distance. The mass provides a bound on a weighted total mean curvature integral for each level set $\Sigma_h$:
\begin{align*}
	m \ge \frac{1}{2(n-1) \omega_{n-1} }\int_{\Sigma_h} \frac{|D f|^2 }{1+ |Df|^2} H_{\Sigma_h} d \mathcal{H}^{n-1}.
\end{align*}
This quantity may be regarded as a quasi-local mass for level sets. Together with the Minkowski inequality, we are able to use this bound to prove a differential inequality for the volume function of the level sets, as long as the volume is not too small (Lemma \ref{le:volume-inequality-optimal}). When computing flat distance, the level sets of small volume are negligible because of the isoperimetric inequality. The differential inequality guarantees that the volumes of the remaining level sets grow as fast as they do for Schwarzschild spaces of comparable mass (see proof of Theorem \ref{th:maximum}). In particular, in dimensions greater than 4, the volume must become infinite very quickly, or in other words, all of these level sets are trapped between two planes that are a short distance apart.

In dimensions 3 and 4, we use a strong maximum principle to show that whenever a ball can be fit inside of a level set, the part of the graph outside that ball must lie beneath a corresponding Schwarzschild graph of equal mass. (See Lemma \ref{le:strong-maximum} and Figure \ref{figure:Schwarzschild}.) As this mass is small, the Schwarzschild graph is close to a plane (in the large fixed ball). The uniformity assumption that we make in low dimension allows us to see that once a level set has large enough volume, we can fit a ball of appropriate size inside (Lemma \ref{le:round-level-sets}). Finally, the differential inequality shows that just a small increase in height is enough to produce a level set that will fit one of these balls inside it.

After the paper was submitted for publication, our results  (Theorem~\ref{th:geq5} and Theorem~\ref{th:lower-dimension}), together with the results of Sormani~\cite{Sormani:2014},  were used by Sormani and the authors to settle the stability question with respect to the Sormani-Wenger topology~\cite{Huang-Lee-Sormani:2014} and thus confirmed the conjecture in~\cite{Lee-Sormani:2014} in the setting of asymptotically flat graphical hypersurfaces.

\begin{ack}
Both authors would like to thank Christina Sormani for discussions. They also thank Hugh Bray and Piotr Chru\'{s}ciel for their interest in this work.
\end{ack}

\section{Background on asymptotically flat graphs}\label{background}

\begin{definition} \label{definition:AF}
Let $f$ be a $C^1$ function defined outside a compact subset of $\mathbb{R}^n$, where $n\ge3$. We say that \emph{the graph of $f$ in $\mathbb{R}^{n+1}$ (or sometimes just $f$) is asymptotically flat}  if 
\begin{align*}
	\lim_{|x| \to \infty} f(x) & = \mbox{\rm{constant}} \mbox{ or } \pm \infty
\end{align*}
and
\begin{align*}
	\lim_{|x| \to \infty} |Df(x)| &=0. \\
\end{align*}
\end{definition}

\begin{definition} \label{de:boundary}
Let $\Omega$ be a  bounded open set in $\mathbb{R}^n$ whose complement is connected.
Let $f\in C^k(\mathbb{R}^n \setminus \overline{\Omega}) \cap C^0 (\mathbb{R}^n \setminus \Omega)$. We say that \emph{the graph of $f$ (or just $f$) is $C^k$ with a minimal boundary} if $f$ is constant on each component of $\partial \Omega$ and $|Df(x)| \to \infty$ as $x\to \partial \Omega$. A \emph{$C^k$ entire function $f$} is just a $C^k$ function defined on all of $\rr^n$.
\end{definition}

\begin{example}\label{example:Schwarzschild}
Let $M$ be a totally geodesic time-slice of the Schwarzschild spacetime of ADM mass $m$. If $m>0$, the region of $M$ outside the event horizon  can be isometrically embedded into $\mathbb{R}^{n+1}$ as the asymptotically flat graph of a smooth function defined on $\mathbb{R}^n \setminus B_{(2m)^{1/(n-2)}}(0)$, with minimal boundary, such that the boundary lies in the plane $\{ x^{n+1}=0 \}$. Explicitly, it is the graph of the function $S_m(|x|)$ where
\[
	S_m(r) =\left\{
	\begin{array}{ll}
	 \sqrt{ 8m (r - 2m)}  &\mbox{ for } n =3\\
	 \sqrt{2m} \log \left( \frac{r}{\sqrt{2m}} + \sqrt{\frac{r^2}{2m} -1} \right) &\mbox{ for } n =4\\
	S_\infty + O(r^{2-\frac{n}{2}}) &\mbox{ for } n \ge 5,
	\end{array}\right.
\]
for some constant $S_\infty$ depending on $n$ and $m$. The function $S_m$ arises from solving the ODE for a spherically symmetric graph with zero scalar curvature.
\end{example}

\begin{definition}[\cite{Lam}]
Let $f$ be an asymptotically flat $C^2$ function defined on an exterior region of $\mathbb{R}^n$. The \emph{ADM mass} of the graph of $f$ is defined by
\begin{align} \label{eq:mass} 
	m &= \frac{1}{2(n-1)  \omega_{n-1}}  \lim_{r\rightarrow \infty}\int_{|x|=r} \frac{1}{ 1 + |Df|^2 } \sum_{i,j}  (f_{ii} f_j - f_{ij} f_i)  \frac{x^j}{|x|}  \, d\mathcal{H}^{n-1},
\end{align}
where $\omega_{n-1}$ is the volume of the unit $(n-1)$-sphere.
\end{definition}
It is shown in  \cite{Lam, Huang-Wu:2013} that under the additional assumptions $|Df(x)|^2 = O_2(|x|^{-q})$  for some $q > (n-2)/2$ and 
 $|Df(x)|^2 |D^2f(x)| = o(|x|^{1-n})$, the graph of $f$ will be asymptotically flat in the usual sense, and the definition of mass above coincides with the usual definition of ADM mass.

\begin{definition}\label{de:asymptotically-Schwarzschild}
Let $\alpha< 2-\frac{n}{2}$. We say that a function $f$ is \emph{uniformly $(r_0, \gamma, \alpha)$-asymptotically Schwarzschild} if $f$ is a $C^1$ function defined on $\rr^n\setminus B_{r_0}$ and there exists a constant $\Lambda$ such that 
\[ 
	\left|f(x) - (\Lambda+ S_m(|x|)) \right| \le \gamma |x|^\alpha, 
\]
for all $|x|>r_0$, where $m$ is the mass of $f$, and $S_m$ is the Schwarzschild function described in the example above.
\end{definition}

The following identity relates the scalar curvature of an asymptotically flat graph and its mass.
\begin{theorem}[\cite{Reilly:1973}] \label{le:divergence}
Let $f\in C^2$ be defined on an open subset of $\mathbb{R}^n$. The scalar curvature of the graph of $f$ can be expressed as the divergence of a vector field as follows:
\[
    R=  \sum_j \sum_i \partial_j \left(\frac{f_{ii} f_j - f_{ij} f_i}{1+|Df|^2} \right). 
\]
\end{theorem}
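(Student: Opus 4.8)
The plan is to reduce the identity to the Gauss equation and then verify the resulting purely algebraic identity by a direct expansion. Write $W = \sqrt{1 + |Df|^2}$. The graph of $f$ carries the induced metric $g_{ij} = \delta_{ij} + f_i f_j$, whose inverse is $g^{ij} = \delta_{ij} - W^{-2} f_i f_j$, and its second fundamental form with respect to the upward unit normal is $h_{ij} = W^{-1} f_{ij}$. Since the graph is a hypersurface in flat $\rr^{n+1}$, the Gauss equation reads $R_{ijkl} = h_{ik} h_{jl} - h_{il} h_{jk}$, and tracing twice with $g$ gives $R = H^2 - |A|^2$, where $H = g^{ij} h_{ij}$ is the (unnormalized) mean curvature and $|A|^2 = g^{ik} g^{jl} h_{ij} h_{kl}$. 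So it is enough to show that the displayed divergence equals $H^2 - |A|^2$.

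First I would record the scalars that appear on both sides: $\Delta f$, $|D^2 f|^2$, $Q := \sum_{i,j} f_{ij} f_i f_j$, and $P := \sum_j \big( \sum_i f_{ij} f_i \big)^2$. Using the explicit form of $g^{ij}$ one computes
\[
H = W^{-1}\big( \Delta f - W^{-2} Q \big), \qquad |A|^2 = W^{-2}\big( |D^2 f|^2 - 2 W^{-2} P + W^{-4} Q^2 \big),
\]
so that
\[
H^2 - |A|^2 = \frac{(\Delta f)^2 - |D^2 f|^2}{W^2} - \frac{2\big( (\Delta f) Q - P \big)}{W^4}.
\]
For the other side, write $p_j := \sum_i f_{ij} f_i$, so that the vector field is $V^j = W^{-2}\big( (\Delta f) f_j - p_j \big)$. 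Differentiating, using $\partial_j (W^2) = 2 p_j$, gives
\[
\operatorname{div} V = -2 W^{-4} \sum_j p_j \big( (\Delta f) f_j - p_j \big) + W^{-2} \sum_j \partial_j \big( (\Delta f) f_j - p_j \big).
\]
The first sum is exactly $(\Delta f) Q - P$ since $\sum_j p_j f_j = Q$. In the second sum the decisive point is that the third-order terms cancel: from $\partial_j \big( (\Delta f) f_j \big)$ one gets $\sum_j (\partial_j \Delta f) f_j + (\Delta f)^2$, while $\partial_j p_j = \sum_i f_{ijj} f_i + \sum_i f_{ij}^2$ contributes $\sum_i f_i \partial_i (\Delta f) + |D^2 f|^2$, and the two third-derivative sums coincide. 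What remains is $(\Delta f)^2 - |D^2 f|^2$, and one arrives at precisely the expression above for $H^2 - |A|^2$.

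There is no genuine obstacle here; the proof is the Gauss equation together with careful bookkeeping. The two points that require attention are the cancellation of the third-derivative terms in $\operatorname{div} V$ — this is what forces an expression built from third derivatives of $f$ to equal the second-order quantity $R$ — and the correct expansion of $|A|^2$ using the inverse metric $g^{ij}$, whose cross terms generate the $W^{-4}$ contributions that must match those in $\operatorname{div} V$. It is worth fixing a sign convention for the normal and for $h_{ij}$, though since $R$ is quadratic in $h$ the final identity is insensitive to it. One could instead organize the computation through the principal curvatures $\lambda_i$ of the graph and the Newton identity $H^2 - |A|^2 = 2\sigma_2(\lambda)$, but the direct expansion seems the cleanest way to land on the specific divergence form in the statement.
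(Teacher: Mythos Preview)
Your computation is correct: the Gauss equation gives $R=H^2-|A|^2$, your expressions for $H$ and $|A|^2$ in terms of $\Delta f$, $|D^2f|^2$, $Q$, $P$ are right, and the divergence expansion matches after the third-order cancellation you identified. The paper does not supply its own proof of this statement---it is quoted as a known identity from Reilly---so there is no argument to compare against; your direct verification via the Gauss equation is a standard and complete way to establish it.
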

Let  $\Omega_h$ be a bounded subset of $\mathbb{R}^n$ such that $\partial \Omega_h = f^{-1}(h)$, denoted by $\Sigma_h$.  Combining this theorem with the divergence theorem, and using the definition of ADM mass above, one obtains, for any regular value $h$ of $f$,
\begin{align} \label{eq:Lam-mass}
	2(n-1) \omega_{n-1} m = \int_{\mathbb{R}^n \setminus \Omega_h} R\, dx + \int_{\Sigma_h} \frac{|D f|^2 }{1+ |Df|^2} H_{\Sigma_h} d \mathcal{H}^{n-1},
\end{align}
where  $H_{ \Sigma_h}$ is the mean curvature of $\Sigma_h$ in the hyperplane $\{x^{n+1}=h\}$ with respect to inward pointing normal~\cite{Lam}. By setting $\Omega_h = \emptyset$, one immediately obtains the positive mass theorem for entire graphs.
\begin{corollary}[\cite{Lam}] Let $f$ be a $C^2$ asymptotically flat entire graph of nonnegative scalar curvature. Then its mass is nonnegative.
\end{corollary}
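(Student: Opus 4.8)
The plan is to specialize the mass identity \eqref{eq:Lam-mass} to the degenerate case in which no level set is used as an inner boundary, i.e. $\Omega_h=\emptyset$. Since the proof of \eqref{eq:Lam-mass} rests only on Theorem \ref{le:divergence} and the divergence theorem, I would just rerun that computation directly on the exhausting family of balls $B_r(0)\subset\rr^n$. By Theorem \ref{le:divergence}, the scalar curvature of the graph is $R=\sum_j\partial_j V^j$ with $V^j=\sum_i\frac{f_{ii}f_j-f_{ij}f_i}{1+|Df|^2}$, so the divergence theorem on $B_r(0)$ gives
\[
\int_{B_r(0)} R\,dx=\int_{|x|=r}\frac{1}{1+|Df|^2}\sum_{i,j}(f_{ii}f_j-f_{ij}f_i)\,\frac{x^j}{|x|}\,d\mathcal{H}^{n-1}.
\]
Letting $r\to\infty$, the right-hand side converges, by the definition of the ADM mass in \eqref{eq:mass}, to $2(n-1)\omega_{n-1}m$.

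Next I would pass to the limit on the left. Since $R\ge 0$ by hypothesis, $r\mapsto\int_{B_r(0)}R\,dx$ is nondecreasing, so the monotone convergence theorem gives $\int_{B_r(0)}R\,dx\to\int_{\rr^n}R\,dx\in[0,\infty]$. Combining the two limits yields $2(n-1)\omega_{n-1}m=\int_{\rr^n}R\,dx\ge 0$, and since $\omega_{n-1}>0$ we conclude $m\ge 0$ (and incidentally that $R$ is integrable on $\rr^n$). Equivalently, one can simply set $\Omega_h=\emptyset$ in \eqref{eq:Lam-mass}: the mean-curvature boundary integral disappears and one is left with exactly $2(n-1)\omega_{n-1}m=\int_{\rr^n}R\,dx$.

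I do not expect any genuine obstacle here. The only points needing a word of care are that the ADM mass integral \eqref{eq:mass} converges (this is part of the standing hypotheses, under which $f$ is asymptotically flat in the usual sense) and the interchange of limit and integral, which is handled cleanly by monotonicity from $R\ge 0$. The entire content of the corollary is thus carried by Reilly's divergence identity together with the sign hypothesis on the scalar curvature.
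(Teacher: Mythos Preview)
Your proposal is correct and follows exactly the paper's approach: the paper simply remarks that setting $\Omega_h=\emptyset$ in \eqref{eq:Lam-mass} immediately gives $2(n-1)\omega_{n-1}m=\int_{\rr^n}R\,dx\ge 0$. Your write-up just unpacks this one-line observation, and the care you take with the monotone convergence step is appropriate.
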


We also note that under the nonnegative scalar curvature hypothesis, the flux integral appearing in the definition of mass is monotone, and thus the mass always exists, though it is potentially infinite.

We recall the following theorem. 

\begin{theorem}[\cite{Huang-Wu:2013, Huang-Wu-Penrose}] \label{th:mean-convexity}
Let $M$ be a two-sided embedded $C^{n+1}$ hypersurface in $\mathbb{R}^{n+1}$ with nonnegative scalar curvature. Assume $M\setminus K$ is the union of asymptotically flat graphs where $K$ is a compact subset of $M$. Suppose that either $M$ has no boundary, or it has a minimal boundary (in the sense described in Definition~\ref{de:boundary}). Then $M$ is weakly mean-convex. Moreover, if $M$  is also minimal, then it must be a hyperplane.\footnote{This last statement, which was not stated explicitly in \cite{Huang-Wu:2013, Huang-Wu-Penrose},  is a simple consequence of \cite[Theorem 2.2]{Huang-Wu:2013}. More generally, a two-sided $C^2$ hypersurface in $\mathbb{R}^{n+1}$ with nonnegative scalar curvature and zero mean curvature must be contained in a hyperplane. }

Furthermore, for any hyperplane intersecting $M$ in a $C^2$ hypersurface $\Sigma$ in the hyperplane, we have ${\bf H}\cdot {\bf H}_{\Sigma}\ge 0$ with ${\bf H} = 0$ only if ${\bf H}_{\Sigma}=0$, where $\bf{H}$ is the mean curvature vector of $M$ in $\mathbb{R}^{n+1}$ and  $ {\bf H}_{\Sigma}$ is the mean curvature vector of $\Sigma$ in the hyperplane. As a consequence, the mean curvature scalar of $\Sigma$ inside the hyperplane has a sign.
\end{theorem}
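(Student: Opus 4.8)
The plan is to reduce every part of the statement to the Gauss equation $R=(\operatorname{tr}A)^2-|A|^2=2\sigma_2(A)$, where $A$ is the shape operator of $M$ in $\mathbb{R}^{n+1}$ (with respect to a chosen unit normal $\nu$) and $\sigma_2$ is the second elementary symmetric function of its eigenvalues, combined with one elementary fact about the G\r{a}rding cone. Namely: if $\sigma_2(A)\ge0$, then the eigenvalue vector of $A$ lies in $\overline{\Gamma_2^+}$ or in $-\overline{\Gamma_2^+}$; in the first case, since $\Gamma_2^+$ restricts into $\Gamma_1^+=\{\sigma_1>0\}$ (i.e.\ $\sum_{j\neq i}\lambda_j>0$ for every $i$ and every $\lambda\in\Gamma_2^+$), one has $\lambda_{\max}(A)\le\operatorname{tr}A$ on the closure, whence by the Courant--Fischer principle $\operatorname{tr}(A|_W)\ge\operatorname{tr}A-\lambda_{\max}(A)\ge0$ for every $(n-1)$-dimensional subspace $W\subset T_pM$; the second case is the negative of the first; and if $\operatorname{tr}A=0$ then $2\sigma_2(A)=-|A|^2\le0$ forces $A=0$. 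Consequently, whenever $R\ge0$, the numbers $\operatorname{tr}A$ and $\operatorname{tr}(A|_W)$ always have the same weak sign, and $\operatorname{tr}A=0$ forces $A=0$. The claim that a minimal such $M$ is a hyperplane is then immediate: $\operatorname{tr}A\equiv0$ gives $A\equiv0$, so $M$ is totally geodesic in $\mathbb{R}^{n+1}$, hence, being connected, contained in an affine hyperplane.

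The weak mean-convexity of $M$ itself is the part I would prove by a maximum-principle argument, as in \cite{Huang-Wu:2013}. On each asymptotically flat graphical end of $M\setminus K$ there is a well-defined ``upward'' unit normal $\nu=(-Df,1)/\sqrt{1+|Df|^2}$, and the mean curvature of $M$ with respect to it is $H=\operatorname{div}\!\bigl(Df/\sqrt{1+|Df|^2}\bigr)$, i.e.\ the minimal-surface operator applied to $f$. By the dichotomy above together with continuity, $\{H\neq0\}$ is open and $H$ has constant sign on each of its components, so it suffices to rule out a component on which $H<0$; there $f$ is a strict supersolution of the minimal-surface equation, and comparison with horizontal hyperplanes --- used as barriers near infinity, where asymptotic flatness forces $Df\to0$, and near the minimal boundary, where $|Df|\to\infty$ makes the graph itself a one-sided barrier --- yields a contradiction. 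This is the step I expect to be the main obstacle: the pointwise inequality $\sigma_2(A)\ge0$ does \emph{not} by itself force $\operatorname{tr}A\ge0$ pointwise, so the global input (asymptotic flatness, connectedness, and the behavior at the minimal boundary) is genuinely needed, and the barrier construction near the inner boundary has to be arranged with care.

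Granting the weak mean-convexity, the ``Furthermore'' assertion follows quickly, with the cone fact doing all the work. Fix a horizontal hyperplane $\Pi=\{x^{n+1}=h\}$ meeting $M$ in a $C^2$ hypersurface $\Sigma$ and a point $p\in\Sigma$; choosing coordinates on $\mathbb{R}^n$ so that $Df(p)$ points along $e_n$, one has $T_p\Sigma=\operatorname{span}\{(e_1,0),\dots,(e_{n-1},0)\}$, an $(n-1)$-dimensional subspace of $T_pM$, and a short computation in these graph coordinates gives the pointwise identities
\[
  \mathbf H\cdot\mathbf H_\Sigma=H\,\operatorname{tr}(A|_{T_p\Sigma})
  \quad\text{and}\quad
  \operatorname{tr}(A|_{T_p\Sigma})=\frac{|Df(p)|}{\sqrt{1+|Df(p)|^2}}\,H_\Sigma ,
\]
where $H=\operatorname{tr}A$ is the scalar mean curvature of $M$ and $H_\Sigma$ the scalar mean curvature of $\Sigma$ inside $\Pi$, each with respect to a suitable unit normal (for $\Sigma$, the one pointing toward $\{f<h\}$). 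Since $R=2\sigma_2(A)\ge0$, the first paragraph applied with $W=T_p\Sigma$ gives $\mathbf H\cdot\mathbf H_\Sigma=H\,\operatorname{tr}(A|_{T_p\Sigma})\ge0$. If $\mathbf H(p)=0$ then $H=0$, hence $A=0$ at $p$, hence $\operatorname{tr}(A|_{T_p\Sigma})=0$ and therefore $H_\Sigma=0$, i.e.\ $\mathbf H_\Sigma(p)=0$. Finally, once $H$ has a fixed sign along $M$ (weak mean-convexity) and since $|Df|/\sqrt{1+|Df|^2}>0$ along $\Sigma$, the displayed identities show that the scalar mean curvature of $\Sigma$ inside $\Pi$ has a fixed sign, which is the last assertion.
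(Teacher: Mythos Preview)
The paper does not prove this theorem; it is quoted from \cite{Huang-Wu:2013, Huang-Wu-Penrose}, with only the footnote indicating that the ``minimal implies hyperplane'' clause follows from \cite[Theorem~2.2]{Huang-Wu:2013}. So there is no in-paper proof to compare against. That said, your argument is essentially the one used in those references, and it is correct.

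The algebraic core is exactly right. The Gauss equation gives $R=2\sigma_2(A)$, and since $\sigma_2$ has Lorentzian signature $(1,n-1)$, the set $\{\sigma_2\ge0\}$ is precisely $\overline{\Gamma_2^+}\cup(-\overline{\Gamma_2^+})$, the two halves meeting only at the origin. On $\overline{\Gamma_2^+}$ one has $\partial\sigma_2/\partial\lambda_i=\sum_{j\ne i}\lambda_j\ge0$, hence $\lambda_{\max}\le\operatorname{tr}A$, and your Courant--Fischer step $\operatorname{tr}(A|_W)\ge\operatorname{tr}A-\lambda_{\max}\ge0$ is clean. The identities
\[
\mathbf H\cdot\mathbf H_\Sigma=H\operatorname{tr}(A|_{T_p\Sigma}),\qquad
\operatorname{tr}(A|_{T_p\Sigma})=\frac{|Df|}{\sqrt{1+|Df|^2}}\,H_\Sigma
\]
check out in graph coordinates with $Df(p)\parallel e_n$, and together with the cone fact they yield $\mathbf H\cdot\mathbf H_\Sigma\ge0$ and the implication $\mathbf H=0\Rightarrow A=0\Rightarrow\mathbf H_\Sigma=0$. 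The sign of $H_\Sigma$ then follows from these two facts once $H$ has a global sign. Your ``minimal $\Rightarrow$ hyperplane'' argument ($H=0\Rightarrow R=-|A|^2\ge0\Rightarrow A\equiv0$) is exactly what the footnote has in mind.

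One caveat on the weak mean-convexity step: your barrier sketch treats $M$ as globally graphical, whereas the theorem allows a compact non-graphical piece $K$. On $K$ there is no height function to which to apply the minimal-surface comparison, so your argument as written does not cover that case; the cited papers handle this with additional care. For the present paper this is harmless, since the theorem is only applied (in Corollary~2.8) to global graphs, but you should flag that your sketch is restricted to that situation rather than the full hypothesis.
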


\begin{corollary}[\cite{Huang-Wu:2013}] Let $M$ satisfy the hypotheses of Theorem~\ref{th:mean-convexity}. Then the mass of each end is nonnegative. Furthermore, if the mass of one end is zero, then $M$ is a hyperplane. 
\end{corollary}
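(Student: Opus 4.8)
The plan is to extract everything from the mass identity~\eqref{eq:Lam-mass} together with the sign information in Theorem~\ref{th:mean-convexity}. Fix one end of $M$; it is the graph of an asymptotically flat $C^2$ function $f$ defined on an exterior region of $\mathbb{R}^n$. For a regular value $h$ of $f$ in the asymptotic regime the level set $\Sigma_h=f^{-1}(h)$ is a nonempty compact $C^2$ hypersurface lying in the hyperplane $\{x^{n+1}=h\}$ and bounding a bounded domain $\Omega_h$, so \eqref{eq:Lam-mass} applies (in the minimal-boundary case one works outside the inner boundary and checks, as in \cite{Lam, Huang-Wu:2013}, that the corresponding flux term drops out since $(1+|Df|^2)^{-1}\to 0$ there). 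The term $\int_{\mathbb{R}^n\setminus\Omega_h}R\,dx$ is nonnegative by hypothesis. For the boundary term, Theorem~\ref{th:mean-convexity} tells us that $H_{\Sigma_h}$ has a fixed sign along $\Sigma_h$, because $M$ is weakly mean-convex so its mean curvature vector, and hence that of $\Sigma_h$, is consistently oriented; comparing with a large round sphere bounding $\Omega_h$ when $h$ is far out identifies this sign as the nonnegative one in the convention of~\eqref{eq:Lam-mass}. Hence both terms of \eqref{eq:Lam-mass} are $\ge 0$ and the mass of that end is $\ge 0$.

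For rigidity, suppose the mass of this end is $0$. Then for every admissible regular value $h$ both terms of \eqref{eq:Lam-mass} vanish: $R\equiv 0$ on the end, and $\int_{\Sigma_h}\frac{|Df|^2}{1+|Df|^2}H_{\Sigma_h}\,d\mathcal H^{n-1}=0$. On a regular level set $|Df|>0$, so this forces $H_{\Sigma_h}\equiv 0$; that is, $\Sigma_h$ is a nonempty compact minimal hypersurface in the Euclidean hyperplane $\{x^{n+1}=h\}$. Since $\Delta_{\Sigma_h}|x|^2\equiv 2(n-1)>0$ on a minimal hypersurface, no such hypersurface exists, so in fact $\Sigma_h=\emptyset$ for almost every $h$. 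By Sard's theorem almost every value of $f$ is regular, and the domain of $f$ is connected, so $f$ must be constant. (In particular the minimal-boundary case cannot occur here, consistent with a Penrose-type inequality.) Thus the end coincides with an open piece of a horizontal hyperplane $\Pi$, so $M$ is a connected $C^{n+1}$ hypersurface of nonnegative scalar curvature that is weakly mean-convex and contains a nonempty open totally geodesic subset lying in $\Pi$.

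It remains to propagate the flatness: I would show that the set of points of $M$ possessing a neighborhood inside $\Pi$ is open and closed. At a boundary point $p$ of this set the tangent plane is $\Pi$, so near $p$ one writes $M$ as the graph of a function $u$ over $\Pi$ that vanishes on an open set; weak mean-convexity makes $u$ a one-sided super- or sub-solution of the quasilinear mean-curvature operator, and the strong maximum principle then forces $u\equiv 0$ near $p$. Once $M\subset\Pi$, completeness of $M$ gives $M=\Pi$. Alternatively, having produced a flat piece one can aim to conclude $\mathbf H\equiv 0$ on $M$ and then invoke the rigidity recorded in the footnote to Theorem~\ref{th:mean-convexity}, or simply cite \cite{Huang-Wu:2013}.

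I expect this last propagation step to be the real obstacle. Nonnegative scalar curvature alone does not give unique continuation here --- one can build a hypersurface (e.g.\ a suitable generalized cylinder) with $R\ge 0$ that is flat on an open set but not everywhere --- so one genuinely has to exploit weak mean-convexity, and the care lies in running a maximum principle for the quasilinear mean-curvature operator across the degenerate locus $\{Du=0\}$ and in checking that $u$ really does stay on one side of $\Pi$ there. Everything else is routine manipulation built on \eqref{eq:Lam-mass} and Theorem~\ref{th:mean-convexity}.
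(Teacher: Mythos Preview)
The paper does not supply a proof of this corollary; it is simply cited from \cite{Huang-Wu:2013}, so there is no argument in the paper to compare your proposal against.

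On your proposal itself: the strategy is sound when $M$ is globally a graph (entire or with minimal boundary), and indeed that special case of nonnegativity is already recorded in the paper as Lam's corollary just above. The gap appears in the full generality of Theorem~\ref{th:mean-convexity}, where $M$ is only assumed graphical \emph{outside} a compact set $K$ and may have several ends. In that situation a single end is the graph of a function $f$ defined only on an exterior region of $\mathbb{R}^n$, and a level set $f^{-1}(h)$ need not be a closed hypersurface bounding a bounded domain $\Omega_h$ --- it can run into the inner boundary of the exterior region --- so identity~\eqref{eq:Lam-mass} is not available as written. One needs a flux/divergence argument adapted to the whole hypersurface $M$, including the non-graphical compact part and all ends; developing and exploiting this is part of what \cite{Huang-Wu:2013} does. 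Your rigidity sketch (force $H_{\Sigma_h}\equiv 0$, note that $\mathbb{R}^n$ admits no closed minimal hypersurface, deduce the end is planar, then propagate via the strong maximum principle for the mean curvature operator) is along the right lines once that machinery is in place, and you have correctly flagged the propagation across the non-graphical region as the genuinely delicate step.
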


\begin{definition}
The mean curvature vector of a hypersurface is \emph{pointing upward} if at every point the vector either points upward or is zero, and it points upward somewhere. One can define the mean curvature vector to be \emph{pointing downward} analogously. 
\end{definition}

For an asymptotically flat graph that satisfies the hypotheses of Theorem~\ref{th:mean-convexity}, the weak mean-convexity of the graph implies that we may sensibly refer to the mean curvature vector field ``pointing upward'' or ``pointing downward.'' Of course, if the mean curvature vector points downward, then simply replacing $f$ by $-f$ will yield a graph with mean curvature pointing upward. 

Our sign convention for the mean curvature vector is such that perturbations of a hypersurface in the direction of the mean curvature vector \emph{decrease} volume.
More specifically, our definition implies that the mean curvature of a unit $(n-1)$-sphere with respect to inward unit normal in $\rr^n$ has positive mean curvature $H = n-1$.  The mean curvature vectors of the Schwarzschild graphs in Example~\ref{example:Schwarzschild} are pointing upward.  In particular, a uniformly $(r_0, \gamma, \alpha)$-asymptotically Schwarzschild graph that satisfies the hypotheses of Theorem~\ref{th:mean-convexity} must have upward pointing mean curvature.

\begin{corollary} \label{co:mean-curvature-vector}
Let $f$ be a $C^{n+1}$ asymptotically flat function, either entire or with minimal boundary, whose graph has nonnegative scalar curvature and upward pointing mean curvature vector field. Let $\Sigma_h$ be a level set of a regular value $h$, then
\[
	{\bf H}_{\Sigma_h} = - H_{\Sigma_h} \frac{(Df,0)}{|Df|} \quad \mbox{and} \quad H_{\Sigma_h} \ge 0.
\]
\end{corollary}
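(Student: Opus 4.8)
\emph{Proof proposal.} The plan is to obtain the first identity essentially from the definitions and then deduce the sign of $H_{\Sigma_h}$ from the last part of Theorem~\ref{th:mean-convexity}.

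Since $h$ is a regular value of $f\in C^{n+1}$, the level set $\Sigma_h = f^{-1}(h)$ is a $C^{n+1}$ hypersurface of the hyperplane $P_h := \{x^{n+1}=h\}$, which I identify with $\rr^n$, and $Df\neq 0$ along $\Sigma_h$. The vector $(Df,0)$ is normal to $\Sigma_h$ inside $P_h$, so the mean curvature vector ${\bf H}_{\Sigma_h}$, being normal to $\Sigma_h$ within $P_h$, is of the form ${\bf H}_{\Sigma_h} = c\,\frac{(Df,0)}{|Df|}$ for some continuous function $c$ on $\Sigma_h$. I then simply \emph{define} $H_{\Sigma_h} := -c$; this is the scalar mean curvature of $\Sigma_h$ with respect to the unit normal $-Df/|Df|$, which points into $\{f<h\}$, in agreement with the convention used in~\eqref{eq:Lam-mass}. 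With this convention the asserted identity ${\bf H}_{\Sigma_h} = -H_{\Sigma_h}\,\frac{(Df,0)}{|Df|}$ holds by construction, and it remains to prove $H_{\Sigma_h}\geq 0$, i.e.\ $c\leq 0$ pointwise on $\Sigma_h$.

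For the sign, I would write down the mean curvature vector ${\bf H}$ of $M=\mathrm{graph}(f)$ in $\rr^{n+1}$. It is normal to $M$, hence ${\bf H}=\mu N$ for the upward unit normal $N = \frac{(-Df,\,1)}{\sqrt{1+|Df|^2}}$ and some scalar function $\mu$ on $M$. Since $N$ has positive last coordinate and the mean curvature vector field of $M$ is pointing upward by hypothesis, ${\bf H}$ is everywhere either zero or a positive multiple of $N$; in particular $\mu\geq 0$ on $M$. The hypersurface $M$ satisfies the hypotheses of Theorem~\ref{th:mean-convexity} (it is a two-sided embedded $C^{n+1}$ graph of nonnegative scalar curvature, entire or with minimal boundary) and $\Sigma_h$ is $C^2$, so that theorem gives ${\bf H}\cdot{\bf H}_{\Sigma_h}\geq 0$ along $\Sigma_h$, with ${\bf H}=0$ forcing ${\bf H}_{\Sigma_h}=0$. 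A direct computation, using $N\cdot(Df,0) = -|Df|^2/\sqrt{1+|Df|^2}$, yields
\[
  {\bf H}\cdot{\bf H}_{\Sigma_h} \;=\; \mu c\,\frac{N\cdot(Df,0)}{|Df|} \;=\; -\,\mu c\,\frac{|Df|}{\sqrt{1+|Df|^2}} \;\geq\; 0,
\]
so $\mu c\leq 0$ at every point of $\Sigma_h$. Where $\mu>0$ this forces $c\leq 0$; where $\mu=0$ we have ${\bf H}=0$, hence ${\bf H}_{\Sigma_h}=0$ and $c=0$. In all cases $c\leq 0$, i.e.\ $H_{\Sigma_h}\geq 0$, completing the argument.

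The one genuinely delicate point — and the step I would take most care with — is the bookkeeping of sign conventions: checking that ``mean curvature vector pointing upward'' together with the graph structure really does give ${\bf H}=\mu N$ with $\mu\geq 0$, and that the normal $-Df/|Df|$ appearing in the identity is the one for which the scalar mean curvature matches the sign convention of~\eqref{eq:Lam-mass} and of Theorem~\ref{th:mean-convexity}. The regular-value hypothesis is exactly what makes $(Df,0)/|Df|$ well defined and $\Sigma_h$ a $C^2$ hypersurface, so that Theorem~\ref{th:mean-convexity} applies to the pair $(M,\Sigma_h)$.
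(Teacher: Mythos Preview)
Your proof is correct and follows essentially the same approach as the paper: write ${\bf H}=\mu\,\frac{(-Df,1)}{\sqrt{1+|Df|^2}}$ with $\mu\ge 0$ from the upward-pointing hypothesis, then apply the inequality ${\bf H}\cdot{\bf H}_{\Sigma_h}\ge 0$ (and the clause ${\bf H}=0\Rightarrow{\bf H}_{\Sigma_h}=0$) from Theorem~\ref{th:mean-convexity} to force $H_{\Sigma_h}\ge 0$. Your version simply spells out in more detail the dot-product computation and the handling of the $\mu=0$ case that the paper leaves implicit.
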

\begin{proof}
Let ${\bf H}$ be the mean curvature vector of the graph of $f$. The upward pointing assumption implies that ${\bf H} = H\frac{(-Df, 1)}{\sqrt{1+|Df|^2}}$ and  $H\ge 0$. Then Theorem~\ref{th:mean-convexity} (that ${\bf H}\cdot {\bf H}_{\Sigma_h}\ge 0$) yields the desired result.
\end{proof}

\section{Volume estimates}
Let $f$ be an asymptotically flat function, either entire or with minimal boundary (defined on the complement of some $\Omega$ in the second case). By assumption, $f$ is constant on each component of $\partial\Omega$. Let $\bar{f}$ denote the extension of $f$ to all of $\rr^n$ such that $\bar{f}$ is constant on each component of $\bar{\Omega}$. For each $h\in\rr$, define $\Omega_h =  \{x\in \mathbb{R}^n: \bar{f}(x) < h \}$ and $ \Sigma_h =\partial^* \Omega_h$, the reduced boundary of~$\Omega_h$. (For a definition of reduced boundary, see \cite[page 72]{GMT}.) 
By Sard's Theorem, if $f$ is $C^n$ on $\rr^n\setminus\bar{\Omega}$, then the set of critical values of $f$ has zero measure, and for each regular value $h$, $\Sigma_h= \partial \Omega_h$ is just a smooth level hypersurface for $f$. We define the volume function as follows: 
\begin{align} \label{de:volume}
	V(h) = |\Sigma_h| = |\partial^* \Omega_h|,
\end{align}
where $|\Sigma_h|$ denotes the $(n-1)$-dimensional Hausdorff measure of $\Sigma_h$. Note that lower semicontinuity of perimeter implies that the function $V$ is left lower semicontinuous.

\begin{definition}
Let $E \subset \mathbb{R}^n$ be a bounded subset of finite perimeter and let $\partial^* E$ be its reduced boundary.  We say that $\partial^* E$ is \emph{outward-minimizing} if 
\[
	| \partial^* E| \le |\partial^* F| 
\]
for any bounded set $F\subset \mathbb{R}^n$ containing $E$.
\end{definition}
\begin{remark}
This property is referred to as the ``minimizing hull" property for $E$ in \cite{Huisken-Ilmanen:2001}. 
\end{remark}

\begin{lemma} \label{pr:volume-increasing}
Let $f$ be a non-constant $C^n$ asymptotically flat function, either entire or with minimal boundary, such that its graph has upward pointing mean curvature vector field. Let $h_{\textup{max}} = \lim_{|x| \to \infty}f(x)$, which is a real number or $\pm\infty$ by assumption. Then $f(x) < h_{\textup{max}}$ everywhere.

Furthermore, if $\Sigma_h$ is outward-minimizing for $h$ in a dense subset, then $V(h)$ is finite for all $h < h_{\textup{max}}$ and $V(h)$ is nondecreasing on $(-\infty, h_{\max})$.  
\end{lemma}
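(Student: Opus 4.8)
The plan is to prove the two assertions separately, obtaining the second from the first.

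For the first assertion I would argue by a maximum principle. The upward-pointing hypothesis says that the mean curvature of the graph with respect to its upward unit normal, namely $H=\operatorname{div}\!\bigl(\tfrac{Df}{\sqrt{1+|Df|^2}}\bigr)\ge 0$ (cf.\ Corollary~\ref{co:mean-curvature-vector}), so $f$ is a subsolution of the minimal surface equation, of which every constant is a solution. If $h_{\max}=+\infty$ there is nothing to prove. If $h_{\max}$ is finite, then $f\to h_{\max}$ at infinity, so comparing $f$ with the constant $h_{\max}$ on exhausting balls $B_R\setminus\overline{\Omega}$ yields $f\le h_{\max}$ away from $\partial\Omega$, and the strong maximum principle rules out equality at an interior point unless $f$ is constant (which it is not); the same reasoning excludes $h_{\max}=-\infty$, since otherwise $f$ would attain a finite interior maximum and hence be constant. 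In the minimal-boundary case one must additionally check that the constant value $c$ of $f$ on each component $\Gamma$ of $\partial\Omega$ is strictly below $h_{\max}$. This I would deduce from the upward-pointing hypothesis: near $\Gamma$ the graph is asymptotic to the vertical cylinder over $\Gamma$, while the horizontal part of its mean curvature vector points opposite to $Df$ (Corollary~\ref{co:mean-curvature-vector}); so $f$ must increase off $\Gamma$, giving $c<\sup f$, and since $f$ has no interior maximum, $\sup f=h_{\max}$ is not attained and $f<h_{\max}$ everywhere.

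For the second assertion, first observe that for every $h<h_{\max}$ the set $\Omega_h=\{\bar f<h\}$ is bounded, since $f(x)\to h_{\max}>h$ forces $f(x)>h$ for all large $|x|$. By Sard's theorem (using $f\in C^n$) almost every $h<h_{\max}$ is a regular value, and for such $h$ the level set $\Sigma_h=f^{-1}(h)$ is a smooth compact hypersurface, hence $V(h)<\infty$. For monotonicity I would fix $h_1<h_2<h_{\max}$ and use density of the outward-minimizing heights to choose $h^*\in(h_1,h_2)$ with $\Sigma_{h^*}=\partial^*\Omega_{h^*}$ outward-minimizing; since $\Omega_{h^*}\subseteq\Omega_{h_2}$ and $\Omega_{h_2}$ is bounded, this gives $V(h^*)\le V(h_2)$. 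Next choose outward-minimizing heights $h_k\uparrow h_1$; since $\Omega_{h_k}\subseteq\Omega_{h^*}$ and $\Omega_{h^*}$ is bounded, $V(h_k)\le V(h^*)$ for every $k$, and left lower semicontinuity of $V$ then gives $V(h_1)\le\liminf_k V(h_k)\le V(h^*)$. Combining, $V(h_1)\le V(h^*)\le V(h_2)$, so $V$ is nondecreasing on $(-\infty,h_{\max})$. Finiteness everywhere follows at once: given $h<h_{\max}$, pick a regular value $h'\in(h,h_{\max})$, so that $V(h)\le V(h')<\infty$.

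The step I expect to be the main obstacle is transferring the monotonicity from the dense set of outward-minimizing heights to all heights: because the hypothesis is only density, not full measure, one cannot simply work with the restriction of $V$ to good heights, and the argument goes through only because $V$ is separately known to be lower semicontinuous from the left (a consequence of lower semicontinuity of perimeter), which is precisely what allows passage to the limit on the side where the chosen good heights accumulate. A secondary, more geometric, difficulty is the minimal-boundary part of the first assertion: the cylinder heuristic used above must be upgraded to a genuine Hopf-type boundary-point argument, which is somewhat delicate under only the regularity assumed in Definition~\ref{de:boundary}.
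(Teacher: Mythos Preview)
Your proposal is correct and follows essentially the same route as the paper. For the first assertion the paper simply says that the strong maximum principle for the mean curvature operator forbids an interior local maximum, and that a boundary local maximum would contradict Corollary~\ref{co:mean-curvature-vector}; your comparison-on-exhausting-balls phrasing and your more detailed boundary discussion unpack the same ideas. For the second assertion the paper also uses left lower semicontinuity of $V$ together with density of outward-minimizing heights, picking a single outward-minimizing $h<h_1$ with $V(h_1)\le V(h)+\epsilon$ and then $V(h)\le V(h_2)$; your intermediate choice of $h^*\in(h_1,h_2)$ is harmless but unnecessary, since each outward-minimizing $h_k<h_1$ already satisfies $V(h_k)\le V(h_2)$ directly.
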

\begin{proof}
Since the mean curvature of the graph of $f$ points upward, the strong maximum principle for the mean curvature operator implies that $f$ cannot attain an interior local maximum unless $f$ is a constant (which it is not, by assumption). If the graph of $f$ has a minimal boundary, then  $f$ does not achieve a local maximum at the boundary; otherwise it would contradict Corollary~\ref{co:mean-curvature-vector}. Therefore $f(x) < h_{\textup{max}}$ everywhere. Thus $\Omega_h$ is a bounded subset for $h< h_{\textup{max}}$ and hence $V(h)$ is finite for any regular value $h$. By left lower semicontinuity of $V$, and density of the regular values, $V(h)$ is finite for all $h<h_{\textup{max}}$. 

We now use the outward-minimizing property to show that $V(h)$ is nondecreasing. Let $h_1<h_2 < h_{\textup{max}}$. Let $\epsilon>0$. By left lower semicontinuity of $V$ and our density assumption, there exists some  $h< h_1 < h_2$ such that $\Sigma_h$ is outward-minimizing and
\[
	V(h_1) \le V(h) + \epsilon \le V(h_2) + \epsilon.
\]
Therefore $V$ is nondecreasing.
\end{proof}

Observe that \eqref{eq:Lam-mass} and the assumption $R\ge 0$ imply
\begin{align*}
	m \ge \frac{1}{2(n-1) \omega_{n-1} }\int_{\Sigma_h} \frac{|D f|^2 }{1+ |Df|^2} H_{\Sigma_h} d \mathcal{H}^{n-1}.
\end{align*}
The goal of next two lemmas is to use the above bound to derive a differential inequality for $V(h)$.

\begin{lemma} \label{pr:volume-inequality}
Let $f \in C^{n+1}(\mathbb{R}^n\setminus \Omega)$  be an asymptotically flat function, either entire or with minimal boundary, whose graph has nonnegative scalar curvature and upward pointing mean curvature vector field. Let $h$ be a regular value of $f$.  Then for any real number $\alpha >0$, we have
\begin{align} \label{eq:volume-inequality}
	V'(h) > \alpha^{-1} \left[ \int_{\Sigma_h}  H_{\Sigma_h} - (1+\alpha^{-2}) C_n m \right],
\end{align}
 where $C_n = 2(n-1) \omega_{n-1}$.
\end{lemma}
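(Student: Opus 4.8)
The plan is to reduce \eqref{eq:volume-inequality} to an elementary pointwise inequality in $|Df|$, after first identifying $V'(h)$ with a mean curvature integral of $\Sigma_h$ and then invoking the bound $\int_{\Sigma_h}\tfrac{|Df|^2}{1+|Df|^2}H_{\Sigma_h}\,d\mathcal H^{n-1}\le C_n m$, which is immediate from \eqref{eq:Lam-mass} and $R\ge 0$.

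First I would compute $V'(h)$. By Lemma~\ref{pr:volume-increasing} the set $\Omega_h$ is bounded, so at a regular value $h$ the level set $\Sigma_h$ is a compact embedded hypersurface, $|Df|$ is bounded and bounded away from zero on it, and the nearby level sets foliate a neighborhood of $\Sigma_h$. Parametrizing that foliation by the flow of $Df/|Df|^2$ (along which $f$ increases at unit speed) and applying the first variation of area, one obtains
\[
	V'(h)=\int_{\Sigma_h}\operatorname{div}_{\Sigma_h}\!\Big(\frac{Df}{|Df|^2}\Big)\,d\mathcal H^{n-1}=\int_{\Sigma_h}\frac{H_{\Sigma_h}}{|Df|}\,d\mathcal H^{n-1},
\]
because $Df/|Df|$ is a unit normal to $\Sigma_h$ and, in the sign convention of the paper, $\operatorname{div}_{\Sigma_h}(Df/|Df|)$ equals $H_{\Sigma_h}$, the mean curvature of $\Sigma_h$ in the hyperplane with respect to the normal pointing into $\Omega_h$; in particular $V'(h)\ge 0$, since $H_{\Sigma_h}\ge 0$ by Corollary~\ref{co:mean-curvature-vector}.

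Next I would prove the pointwise inequality
\[
	\frac1t>\frac1\alpha\Big(1-(1+\alpha^{-2})\frac{t^2}{1+t^2}\Big)\qquad\text{for all }t>0,\ \alpha>0.
\]
Clearing the positive denominator $\alpha t(1+t^2)$, this is equivalent to $\alpha^{-2}t^3+\alpha t^2+\alpha>t$, and since $\alpha t^2\ge 0$ it suffices to show $\alpha^{-2}t^3+\alpha>t$; this follows from the weighted AM--GM inequality with weights $\tfrac13,\tfrac23$, which gives $\alpha^{-2}t^3+\alpha\ge \tfrac{3}{2^{2/3}}\,t>t$. Taking $t=|Df|$ and multiplying by $H_{\Sigma_h}\ge 0$ yields, pointwise on $\Sigma_h$,
\[
	\frac{H_{\Sigma_h}}{|Df|}\ge \alpha^{-1}H_{\Sigma_h}-\alpha^{-1}(1+\alpha^{-2})\,\frac{|Df|^2}{1+|Df|^2}\,H_{\Sigma_h},
\]
with strict inequality wherever $H_{\Sigma_h}>0$.

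Finally I would integrate over $\Sigma_h$. Since each component of $\Sigma_h$ is a closed hypersurface of $\rr^n$ it cannot be minimal, so $H_{\Sigma_h}>0$ on a set of positive $\mathcal H^{n-1}$-measure and the integrated inequality becomes strict; combining it with Step~1 and then estimating $\int_{\Sigma_h}\tfrac{|Df|^2}{1+|Df|^2}H_{\Sigma_h}\,d\mathcal H^{n-1}\le C_n m$ via \eqref{eq:Lam-mass} (the substitution being legitimate because the coefficient $-\alpha^{-1}(1+\alpha^{-2})$ is negative) gives precisely \eqref{eq:volume-inequality}. I expect the only delicate point to be Step~1: verifying that $V$ is differentiable at a regular value with the asserted derivative and keeping the sign conventions consistent; if one prefers, the same identity drops out of the coarea formula applied to $\operatorname{div}(Df/|Df|)$ over a neighborhood foliated by level sets. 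Everything after Step~1 is routine.
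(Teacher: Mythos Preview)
Your argument is correct and uses the same two ingredients as the paper's proof: the first-variation identity $V'(h)=\int_{\Sigma_h} H_{\Sigma_h}/|Df|$ (the paper derives this via Corollary~\ref{co:mean-curvature-vector} and the variation vector $Df/|Df|^2$, just as you do) and the mass bound $\int_{\Sigma_h}\tfrac{|Df|^2}{1+|Df|^2}H_{\Sigma_h}\le C_n m$ from~\eqref{eq:Lam-mass}. The difference lies in how the two are combined. The paper splits $\Sigma_h$ into $\{|Df|<\alpha\}$ and $\{|Df|\ge\alpha\}$: on the first set it uses $1/|Df|>1/\alpha$, and on the second it bounds $\int H_{\Sigma_h}$ by $(1+\alpha^{-2})C_n m$ via the mass inequality, then discards the nonnegative contribution of the second set to $V'(h)$. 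You instead prove the single pointwise inequality $\tfrac{1}{t}>\tfrac{1}{\alpha}\bigl(1-(1+\alpha^{-2})\tfrac{t^2}{1+t^2}\bigr)$, multiply by $H_{\Sigma_h}\ge 0$, and integrate. Your route is slightly slicker --- it avoids the case split and makes the strictness transparent (via $H_{\Sigma_h}>0$ somewhere on the closed hypersurface) --- but it is really a smooth interpolation of the paper's cut-off argument, and neither approach yields a sharper constant than the other.
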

\begin{proof}
Let $\{|Df|\ge \alpha\}$ be the set of points $x$ in $\mathbb{R}^n$ where $|Df(x)| \ge \alpha$. The set $\{ |Df| < \alpha\}$ is defined analogously.  By \eqref{eq:Lam-mass} and $R\ge 0$ we have
\begin{align} \label{equation:mass-bound}
\begin{split}
	C_n m &\ge \int_{\Sigma_h \cap \{ |Df| \ge \alpha\} } \frac{|D f|^2 }{1+ |Df|^2} H_{\Sigma_h}\\
	& \ge \frac{\alpha^2 }{1+ \alpha^2 }\int_{\Sigma_h \cap \{ |Df| \ge \alpha\} } H_{\Sigma_h}.
\end{split}
\end{align}
On the other hand, the variation of the level sets at $\Sigma_h$ for a regular value $h$ is
\[
	\frac{d}{dh} \Sigma_h(x)= \frac{Df(x)}{|Df(x)|^2}.
\] 
Since $V(h)$ is finite by Lemma~\ref{pr:volume-increasing}, we may compute $V'(h)$ using the first variation formula:
\begin{align*}
	V'(h)&=- \int_{\Sigma_h } {\bf H}_{\Sigma_h}\cdot  \frac{(Df,0)}{|Df|^2} \\
	&=\int_{\Sigma_h } \frac{H_{\Sigma_h}}{|Df|} \\
	&= \int_{\Sigma_h \cap \{ |Df| < \alpha\}}  \frac{H_{\Sigma_h}}{|Df|}  + \int_{\Sigma_h \cap \{ |Df| \ge \alpha\}}  \frac{H_{\Sigma_h}}{|Df|} \\
	& > \frac{1}{\alpha} \int_{\Sigma_h \cap \{ |Df| < \alpha\}}  H_{\Sigma_h}\\
	& = \frac{1}{\alpha} \left( \int_{\Sigma_h}  H_{\Sigma_h} - \int_{\Sigma_h \cap \{ |Df| \ge \alpha\}}  H_{\Sigma_h}\right),
\end{align*}
where we used Corollary~\ref{co:mean-curvature-vector} in the second equality. The desired result follows by substituting the second integral by \eqref{equation:mass-bound}.
\end{proof}

\begin{lemma} \label{le:volume-inequality-optimal}
Let $f$ be a $C^{n+1}$ asymptotically flat function, either entire or with minimal boundary, whose graph has nonnegative scalar curvature and upward pointing mean curvature vector field. Assume $m>0$, and let $h$ be a regular value of $f$ such that $V(h) > \omega_{n-1}(2m)^{\frac{n-1}{n-2}}$. If $\Sigma_h$ is strictly mean-convex and outward-minimizing, then 
\begin{align} \label{eq:volume2}
	V'(h)> C_n \frac{2m}{3 \sqrt{3}} \left[ \frac{1}{2m} \left( \frac{V(h)}{\omega_{n-1}}\right)^{\frac{n-2}{n-1}} - 1 \right]^{\frac{3}{2}},
\end{align}
 where $C_n = 2(n-1) \omega_{n-1}$.
\end{lemma}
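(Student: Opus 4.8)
The plan is to combine the differential inequality \eqref{eq:volume-inequality} of Lemma~\ref{pr:volume-inequality} with the Minkowski inequality for the total mean curvature of the level set $\Sigma_h$, and then to optimize over the free parameter $\alpha>0$. The first step is to invoke the Minkowski-type inequality of Huisken~\cite{Huisken} and of Freire--Schwartz~\cite{Freire-Schwartz:2014}: since $h$ is a regular value, $\Sigma_h=\partial\Omega_h$ is a smooth compact hypersurface sitting inside the hyperplane $\{x^{n+1}=h\}\cong\rr^n$, and by hypothesis it is strictly mean-convex and outward-minimizing, so
\[
	\int_{\Sigma_h} H_{\Sigma_h}\, d\mathcal H^{n-1}\ \ge\ (n-1)\,\omega_{n-1}^{1/(n-1)}\, V(h)^{(n-2)/(n-1)}\ =\ \tfrac{C_n}{2}\left(\tfrac{V(h)}{\omega_{n-1}}\right)^{(n-2)/(n-1)},
\]
where $C_n=2(n-1)\omega_{n-1}$; the normalization of the constant is checked against the round unit sphere in $\rr^n$.

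Next I would substitute this lower bound into \eqref{eq:volume-inequality}. Writing $A:=\tfrac{C_n}{2}\bigl(V(h)/\omega_{n-1}\bigr)^{(n-2)/(n-1)}$ and $B:=A-C_n m$, this yields, for every $\alpha>0$,
\[
	V'(h)\ >\ B\,\alpha^{-1}-C_n m\,\alpha^{-3}=:g(\alpha).
\]
The hypothesis $V(h)>\omega_{n-1}(2m)^{(n-1)/(n-2)}$ is precisely what forces $\bigl(V(h)/\omega_{n-1}\bigr)^{(n-2)/(n-1)}>2m$, i.e. $A>C_n m$, so $B>0$ and $g$ has a genuine interior maximum on $(0,\infty)$.

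Finally I would carry out the one-variable optimization: $g'(\alpha)=0$ at $\alpha_\ast=\sqrt{3C_n m/B}$, and a short computation gives $g(\alpha_\ast)=\tfrac{2}{3\sqrt3}\,B^{3/2}(C_n m)^{-1/2}$. Since $B=C_n m\bigl[\tfrac{1}{2m}(V(h)/\omega_{n-1})^{(n-2)/(n-1)}-1\bigr]$, this equals exactly $C_n\tfrac{2m}{3\sqrt3}\bigl[\tfrac{1}{2m}(V(h)/\omega_{n-1})^{(n-2)/(n-1)}-1\bigr]^{3/2}$. Applying \eqref{eq:volume-inequality} with the specific choice $\alpha=\alpha_\ast$, together with the Minkowski bound above, then gives $V'(h)>g(\alpha_\ast)$, which is \eqref{eq:volume2}.

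The only deep ingredient here is the Minkowski inequality, so the point I would expect to need the most care with is verifying that its hypotheses genuinely apply to $\Sigma_h$ for the regular value $h$ — this is exactly where the structural assumptions that the smooth level sets are strictly mean-convex and outward-minimizing get used — together with the bookkeeping that the strict inequality survives the argument; the latter is automatic, since it is inherited directly from Lemma~\ref{pr:volume-inequality} applied at $\alpha=\alpha_\ast$, so nothing is lost in passing to the attained maximum of $g$. Everything else is elementary calculus.
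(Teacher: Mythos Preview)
Your proof is correct and follows essentially the same route as the paper: apply the Minkowski inequality of \cite{Huisken, Freire-Schwartz:2014} to bound $\int_{\Sigma_h} H_{\Sigma_h}$ from below, insert this into \eqref{eq:volume-inequality}, and optimize over $\alpha$. Your introduction of the abbreviations $A$, $B$, and $g(\alpha)$ makes the calculus a bit more transparent, but the optimal $\alpha_\ast=\sqrt{3C_n m/B}$ you find is exactly the paper's choice, and the resulting bound is identical.
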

\begin{proof}
Recall the following Minkowski inequality~\cite{Huisken, Freire-Schwartz:2014} for outward-minimizing $\Sigma_h$ with $H_{\Sigma_h} >0$:
\[
		\int_{\Sigma_h}  H_{\Sigma_h}  \ge \frac{C_n}{2} \left( \frac{V(h)}{\omega_{n-1}} \right)^{\frac{n-2}{n-1}}.
\]
Inserting the Minkowski inequality into \eqref{eq:volume-inequality} yields
\begin{align} \label{eq:volume}
	V'(h) > C_n \alpha^{-1} \left[ \frac{1}{2} \left(\frac{V(h)}{\omega_{n-1}} \right)^{\frac{n-2}{n-1}} - (1 + \alpha^{-2})m\right].
\end{align}
For $V(h)> \omega_{n-1}(2m)^{\frac{n-1}{n-2}}$, the right hand side of \eqref{eq:volume}, as a function of $\alpha \in (0, \infty)$, attains a global maximum at
\[
	\alpha =\sqrt{3} \left[\frac{1}{2m} \left( \frac{V(h)}{\omega_{n-1}}\right)^{\frac{n-2}{n-1}} -1\right] ^{-\frac{1}{2}}.
\]   
The desired inequality follows by inserting this choice of $\alpha$ in \eqref{eq:volume}.
\end{proof}

\begin{remark}
From our proof it is easy to see that \eqref{eq:volume2} is not optimal. For the Schwarzschild graph $h = S_m(r)$ of mass $m>0$, we can explicitly compute  
\begin{align*}
	V'(h) &= (n-1)\omega_{n-1} r^{n-2} (S_m'(r))^{-1}\\
	&=(n-1) \omega_{n-1} r^{n-2} \left[ \frac{1}{2m} r^{n-2} - 1\right]^{-\frac{1}{2}}.
\end{align*}
On the other hand, the right hand side of \eqref{eq:volume2} is
\begin{align*}
	\frac{2}{3\sqrt{3}}(n-1) \omega_{n-1}  (r^{n-2} - 2m) \left[ \frac{1}{2m} r^{n-2} - 1\right]^{-\frac{1}{2}}.
\end{align*}
\end{remark}
We now choose a height $h_0$ large enough so that the previous lemma applies for $h\geq h_0$, but still has $V(h_0)$ bounded in terms of the mass. We will argue that the graph of $f$ is close to the plane $x^{n+1} = h_0$ in the flat topology.

\begin{definition} \label{de:horizon-height}
Let $f$ be a $C^{n+1}$ asymptotically flat function, either entire or with minimal boundary, such that its graph has upward pointing mean curvature vector field, and assume $m>0$. Let $h_0$ be the height defined by
\[
	h_0 = \sup \{ h: V(h) \le 2\omega_{n-1} (2m)^{\frac{n-1}{n-2}}\}.
\]
\end{definition}

\begin{remark}
The factor of $2$ in the definition of $h_0$ is chosen for convenience. In fact, for any $\beta>1$,  one can define $h_0$ to be the supremum of $\{h: V(h) \le \beta\omega_{n-1} (2m)^{\frac{n-1}{n-2}}\}$. Then the constant $C$ in Theorem~\ref{th:maximum} depends on $\beta$, which may, however, diverge to $\infty$ as $\beta \to 1^+$.
\end{remark}

We need a comparison principle for ordinary differential inequalities. The statement is standard for $C^1$ solutions. Here we consider rough solutions.
\begin{lemma} \label{le:comparison}
Let $V:[a,b]\to \mathbb{R}$ be nondecreasing. Suppose $V' \ge F(V)$ holds almost everywhere in $[a,b]$. Suppose that $F$ is nondecreasing and continuously differentiable. Let $Y$ be a $C^2$ function satisfying
\[
	Y' = F(Y)\quad \mbox{and} \quad Y(a) \le V(a).
\]
Then $Y\le V$ on $[a,b]$.
\end{lemma}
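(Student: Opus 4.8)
The plan is to convert the almost-everywhere differential inequality satisfied by the rough function $V$ into an integral inequality that is insensitive to the possible discontinuities of $V$, and then close the estimate with a Gronwall argument. First I would fix the elementary bounds: since $[a,b]$ is compact, $V$ is bounded (being monotone) and $Y$ is bounded (being continuous), so all the values of $V$ and of $Y$ on $[a,b]$ lie in a common compact set $K$; as $F$ is $C^1$, it is Lipschitz on $K$, say with constant $L=\sup_K|F'|$.

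The two integral statements I would record are the following. On the one hand, the classical refinement of the fundamental theorem of calculus for a nondecreasing function gives $\int_s^t V'(\tau)\,d\tau\le V(t)-V(s)$ for all $a\le s\le t\le b$, where $V'$ denotes the a.e.-defined derivative; combined with the hypothesis $V'\ge F(V)$ a.e., this yields
\[
V(t)\ \ge\ V(s)+\int_s^t F(V(\tau))\,d\tau,\qquad a\le s\le t\le b.
\]
On the other hand, $Y\in C^2$ with $Y'=F(Y)$ gives the exact identity $Y(t)=Y(s)+\int_s^t F(Y(\tau))\,d\tau$.

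Next I would set $w=V-Y$ and $w^-=\max(-w,0)\ge 0$, a bounded measurable function on $[a,b]$, and subtract the two displays with $s=a$, using $w(a)=V(a)-Y(a)\ge 0$:
\[
w(t)\ \ge\ \int_a^t\bigl(F(V(\tau))-F(Y(\tau))\bigr)\,d\tau.
\]
The pointwise estimate I need for the integrand is $F(V(\tau))-F(Y(\tau))\ge -L\,w^-(\tau)$: where $w(\tau)\ge 0$ this holds because $F$ is nondecreasing (the left side is then $\ge 0$ and $w^-(\tau)=0$), and where $w(\tau)<0$ it follows from the Lipschitz bound $|F(V(\tau))-F(Y(\tau))|\le L\,|V(\tau)-Y(\tau)|=L\,w^-(\tau)$. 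Hence $w(t)\ge -L\int_a^t w^-(\tau)\,d\tau$, which rearranges to $w^-(t)\le L\int_a^t w^-(\tau)\,d\tau$ for every $t\in[a,b]$. A Gronwall argument with vanishing forcing term then finishes the proof: the absolutely continuous function $\Phi(t)=\int_a^t w^-$ satisfies $\Phi(a)=0$, $\Phi\ge 0$, and $\Phi'=w^-\le L\Phi$ a.e., so $(e^{-Lt}\Phi(t))'\le 0$ a.e., forcing $\Phi\equiv 0$ and therefore $w^-\equiv 0$, i.e.\ $Y\le V$ on $[a,b]$.

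The one step that is not purely routine is the passage from the a.e.\ inequality $V'\ge F(V)$ to the integral inequality $V(t)-V(s)\ge\int_s^t F(V)$: this is exactly where the hypothesis that $V$ is merely monotone (and possibly discontinuous) enters, and it rests on the classical fact that the a.e.\ derivative of a monotone function integrates to at most its total increment. Everything afterwards is a standard Gronwall estimate, needing only the routine checks that $w^-$ is bounded and measurable and that $F$ is Lipschitz on the compact range of values involved. It is worth noting that one cannot replace this by a naive ``first crossing time'' argument, since $F$ need not be \emph{strictly} increasing and $V$ need not be continuous, so such an argument would not produce the required strict contradiction.
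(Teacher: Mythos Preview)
Your argument is correct, and it takes a genuinely different route from the paper. The paper proceeds by a perturbed first-crossing argument: for each $\epsilon>0$ it introduces the solution $Y_\epsilon$ of $Y_\epsilon'=F(Y_\epsilon)$ with $Y_\epsilon(a)=Y(a)-\epsilon$, then argues by contradiction that $Y_\epsilon<V$ everywhere by taking the infimum of the set where $Y_\epsilon\ge V$, using monotonicity of $V$ and continuity of $Y_\epsilon$ to show $V(t_0)=Y_\epsilon(t_0)$, and then comparing integrals via the same fundamental-theorem inequality you invoke, together with $F$ nondecreasing, to force $V(t_0)>Y_\epsilon(t_0)$; letting $\epsilon\to0$ gives the result. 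So your closing remark is a little ironic: the paper \emph{does} use a first-crossing argument, but the $\epsilon$-shift is exactly what supplies the strictness you correctly identify as missing from the naive version. Your Gronwall approach trades that device for the Lipschitz bound on $F$ (which you extract from the $C^1$ hypothesis on a compact range), and it has the advantage of being a single direct estimate with no limiting procedure; the paper's approach, by contrast, uses only the monotonicity of $F$ in the comparison step (the $C^1$ hypothesis enters only to guarantee existence and continuous dependence of $Y_\epsilon$). Both rest on the same non-routine ingredient, namely $\int_s^t V'\le V(t)-V(s)$ for nondecreasing $V$.
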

\begin{proof}
For any $\epsilon>0$, let $Y_{\epsilon}$ be the unique $C^2$ solution to 
\[
	Y'_{\epsilon} = F(Y_{\epsilon}) \quad \mbox{and} \quad Y_{\epsilon}(a) = Y(a) - \epsilon.
\] 
Note that $Y_{\epsilon}$ varies continuously in $\epsilon$ and $\lim_{\epsilon \to 0}Y_{\epsilon} = Y$ at each point of $[a,b]$. Therefore it suffices to show that $ Y_{\epsilon}<V$ for any $\epsilon >0$.  We prove it by contradiction.  

Suppose $Y_{\epsilon}(t) \ge V(t)$ for some $\epsilon$ and for some $t\in [a,b]$.  Define
\[
	t_0 = \inf \{ t\in [a,b] : Y_{\epsilon}(t) \ge V(t)\},
\]
which exists by assumption. Since $V$ is nondecreasing and $Y$ is continuous, it follows that $t_0 > a$ and  $V(t_0)=Y_{\epsilon}(t_0)$. By the definition of $t_0$,  $Y_{\epsilon}(t)<V(t)$ on $[a, t_0)$, and therefore $F(Y_\epsilon)\le F(V)$ on $[a, t_0)$ since $F$ is nondecreasing. If $E$ is the measure zero set where $V' \ge F(V)$ fails, we have
\begin{align*}
\int_{[a,t_0)\setminus E} V'  &\ge \int_{[a,t_0)\setminus E} F(V) \\
& \ge \int_{[a,t_0)\setminus E} F(Y_\epsilon) \\
 &= \int_{[a,t_0)\setminus E} Y_\epsilon'\\
 &= \int_{[a,t_0)} Y_\epsilon' = Y_\epsilon(t_0)-Y_\epsilon(a)\\
 & = Y_\epsilon(t_0) -(Y(a)-\epsilon) \ge Y_\epsilon(t_0) - V(a) +\epsilon.
\end{align*}
 On the other hand, by the fundamental theorem of calculus for nondecreasing functions, 
 \[ V(t_0) - V(a) \ge \int_{[a,t_0)\setminus E} V' \ge Y_\epsilon(t_0) - V(a) +\epsilon.\]
So $V(t_0) > Y_\epsilon(t_0)$, which is a contradiction.
\end{proof}

\begin{theorem} \label{th:maximum}
Let $f$ be a $C^{n+1}$ asymptotically flat function, either entire or with minimal boundary, whose graph has nonnegative scalar curvature and upward pointing mean curvature vector field. Assume $m>0$, and that $\Sigma_h$ is strictly mean-convex and outward-minimizing for almost every $h$. Then the following results hold.

For $n\ge 5$, $f$ is a bounded function and there exists a constant $C$ (depending only on $n$) such that
\[
	0< \sup(f) - h_0 < C m^{\frac{1}{n-2}}.
\]

For $n= 3$ or $4$, and for any $h>h_0$, there exists an absolute constant $C$ such that
\begin{align*}
	0& \le h - h_0 \le
	  \left\{ \begin{array}{ll}
 C\sqrt{m}[V(h)]^{\frac{1}{4}} & \text{for }n=3\\
 C \sqrt{m} \log[m^{-\frac{3}{2}} V(h)]& \text{for }n=4.
 \end{array}\right.
\end{align*}

\end{theorem}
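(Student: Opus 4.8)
The plan is to integrate the differential inequality from Lemma~\ref{le:volume-inequality-optimal} using the comparison principle of Lemma~\ref{le:comparison}, and to extract height bounds from the growth rate of the comparison solution. First I would set up the comparison ODE. Writing $u = \frac{1}{2m}\bigl(\frac{V}{\omega_{n-1}}\bigr)^{\frac{n-2}{n-1}} - 1$, Lemma~\ref{le:volume-inequality-optimal} gives $V'(h) > C_n \frac{2m}{3\sqrt 3}\, u^{3/2}$ whenever $V(h) > \omega_{n-1}(2m)^{\frac{n-1}{n-2}}$, i.e.\ whenever $u > 1/2$ (more precisely $u>0$ after shifting; one must be slightly careful since the lemma requires $V(h) > \omega_{n-1}(2m)^{(n-1)/(n-2)}$, which is $u>0$, but by the choice of $h_0$ in Definition~\ref{de:horizon-height}, for $h \geq h_0$ we have $V(h) \geq 2\omega_{n-1}(2m)^{(n-1)/(n-2)}$, comfortably in the region where the lemma applies). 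So define $F(V)$ to be the right-hand side of \eqref{eq:volume2}; this $F$ is nondecreasing and $C^1$ on the relevant range of $V$, so Lemma~\ref{le:comparison} applies with $a = h_0$ (after noting $V$ is nondecreasing by Lemma~\ref{pr:volume-increasing}, using the outward-minimizing hypothesis). Let $Y$ solve $Y' = F(Y)$ with $Y(h_0) = V(h_0) = 2\omega_{n-1}(2m)^{(n-1)/(n-2)}$; then $Y \leq V$ on $[h_0, h_{\max})$.

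Next I would solve the comparison ODE essentially explicitly, or rather bound its inverse. Separating variables, $h - h_0 = \int_{Y(h_0)}^{Y(h)} \frac{dV}{F(V)}$. Substituting $V = \omega_{n-1}(2m)^{\frac{n-1}{n-2}}(1+u)^{\frac{n-1}{n-2}}$ turns this into an integral of the form $\mathrm{const}\cdot\sqrt{m}\int \frac{(1+u)^{1/(n-2)}}{u^{3/2}}\,du$ (the powers of $m$ collect to $m^{1/2}$ after the chain rule, using $dV \sim m^{(n-1)/(n-2)}$ and $F \sim m \cdot u^{3/2}$ with the extra $m^{\cdots}$ from reexpressing $(V/\omega_{n-1})^{(n-2)/(n-1)}$). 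For the lower bound $0 \le \sup f - h_0$ (or $0 \le h - h_0$), one just notes $V$ is nondecreasing and $V(h) \geq V(h_0)$ forces $h \geq h_0$; actually the clean statement is that $\sup f > h_0$ because $V(h_0) < V(h_{\max}^-) = \infty$ in dimensions $\geq 5$. For the upper bound, the key is the behavior of $\int^{\infty} u^{1/(n-2)-3/2}\,du$ near $u = \infty$: the exponent is $\frac{1}{n-2} - \frac{3}{2}$, which is $< -1$ exactly when $\frac{1}{n-2} < \frac{1}{2}$, i.e.\ $n > 4$. So for $n \geq 5$ the integral converges at infinity, giving a finite bound $\sup f - h_0 < C m^{1/(n-2)}$ after also checking convergence at the lower endpoint $u_0 = 1$ (where the integrand is bounded); the power of $m$ is $m^{1/2}$ times... wait — one must track that for $n\ge 5$ the total power is actually $m^{1/(n-2)}$, which requires redoing the substitution keeping $S_\infty$-type scaling in mind, i.e.\ using that $V \to \infty$ corresponds to $r \to \infty$ in the Schwarzschild model and $h$ is bounded; I would verify this via the explicit Schwarzschild computation in the Remark. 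For $n = 3$: the integral $\int_{u_0}^{U} u^{1 - 3/2}\,du = \int u^{-1/2}\,du \sim 2\sqrt{U}$, and $U \sim (V/(m^{(n-1)/(n-2)}\omega_{n-1}))^{(n-2)/(n-1)}$ with $n=3$ gives $U \sim V^{1/2} m^{-1}$ roughly, so $h - h_0 \lesssim \sqrt m \cdot \sqrt{U} \lesssim \sqrt m \cdot V^{1/4} m^{-1/2}\cdot\sqrt m = $ ... collecting carefully yields $C\sqrt m\, V(h)^{1/4}$. For $n = 4$: the exponent $\frac{1}{n-2} - \frac32 = -1$, so $\int u^{-1}\,du \sim \log U$, and $U \sim m^{-3/2}V(h)$ (since $(n-1)/(n-2) = 3/2$), giving $h - h_0 \lesssim C\sqrt m \log[m^{-3/2}V(h)]$.

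Finally I would assemble: in each dimension, combine $Y \leq V$ (so the comparison solution's height coordinate dominates the true one, i.e.\ if the comparison reaches volume $V(h)$ at parameter-height $\leq h - h_0$... actually the inequality $Y \le V$ means $h - h_0 = \int_{V(h_0)}^{Y(h)}\frac{dV}{F} \le \int_{V(h_0)}^{V(h)}\frac{dV}{F}$, wait I need $Y(h) \le V(h)$ hence the upper limit is smaller, giving an \emph{upper} bound on $h-h_0$ — good, that is exactly the direction claimed) with the explicit integral estimates above. The main obstacle I anticipate is \textbf{bookkeeping the powers of $m$} through the substitution so that the $m^{1/(n-2)}$ in the $n \geq 5$ case and the $\sqrt m$ prefactors in $n = 3, 4$ come out correctly; the qualitative structure (convergent vs.\ logarithmically vs.\ polynomially divergent integral according to whether $\frac{1}{n-2} - \frac32$ is $<-1$, $=-1$, or $>-1$) is immediate, but matching constants and checking the lower endpoint contribution does not blow up requires care. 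A secondary point is justifying that $V$ is absolutely continuous / satisfies the hypotheses of Lemma~\ref{le:comparison} on $[h_0, h]$ — that $V' \geq F(V)$ holds a.e.\ follows from \eqref{eq:volume2} at regular values together with the fact that critical values have measure zero (Sard) and $V$ is monotone hence differentiable a.e., but one should remark that the strict-mean-convexity and outward-minimizing hypotheses are in force for a.e.\ $h$ as assumed.
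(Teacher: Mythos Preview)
Your approach is correct and is essentially the paper's argument: set up the comparison ODE from Lemma~\ref{le:volume-inequality-optimal}, invoke Lemma~\ref{le:comparison}, and read off the height bound from the growth of the comparison solution. The only substantive difference is organizational. You work directly with the $m$-dependent ODE, separate variables, and try to track powers of $m$ through the substitution $V = \omega_{n-1}(2m)^{\frac{n-1}{n-2}}(1+u)^{\frac{n-1}{n-2}}$; the paper instead rescales at the outset via
\[
\tilde f(x) = m^{-\frac{1}{n-2}}\bigl(f(m^{\frac{1}{n-2}}x)-h_0\bigr),
\]
which has mass $1$, so the rescaled volume $\tilde V$ satisfies an $m$-free version of \eqref{eq:volume2} with $\tilde V(0)\ge 2\cdot 2^{\frac{n-1}{n-2}}\omega_{n-1}$. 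The comparison solution $Y$ then depends only on $n$, and one reads off directly that $Y$ blows up in finite time for $n\ge 5$, grows like $h^4$ for $n=3$, and grows exponentially for $n=4$; undoing the rescaling gives the stated powers of $m$ with no bookkeeping. Your substitution is exactly this rescaling in disguise, which is why your prefactor should come out as $m^{1/(n-2)}$ (not $\sqrt m$ generically, as you initially wrote and then flagged): $dV/F(V)\sim m^{\frac{n-1}{n-2}}/m = m^{\frac{1}{n-2}}$. Doing the rescaling first would eliminate the fumbling you anticipate in the $m$-bookkeeping. One small correction: you should take $Y(h_0)=2\omega_{n-1}(2m)^{\frac{n-1}{n-2}}\le \lim_{h\to h_0^+}V(h)$ rather than asserting $V(h_0)$ equals this value.
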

\begin{proof}
Rescale $f$ by
\[
	\tilde{f} (x) = m^{-\frac{1}{n-2}} (f(m^{\frac{1}{n-2}}x)-h_0).
\]
Then the graph of $\tilde{f}$ is asymptotically flat with nonnegative scalar curvature and mass equal to $1$.  

For $h>0$, define $\tilde{V}(h)$ to be the volume function of $\tilde{f}$ defined as in~\eqref{de:volume}, and then define $\tilde{V}$ at $0$ by
\[
	\tilde{V}(0) := \lim_{h\to 0^+} \tilde{V}(h) =   \lim_{h\to h_0^+}m^{-\frac{n-1}{n-2}}V(h)  \ge 2 \left(2^{\frac{n-1}{n-2}} \omega_{n-1}\right).
\]
By \eqref{eq:volume2}, the differential inequality
\[
		\tilde{V}'(h) > C_n \frac{2}{3 \sqrt{3}} \left[ \frac{1}{2} \left( \frac{\tilde{V}(h)}{\omega_{n-1}}\right)^{\frac{n-2}{n-1}} - 1 \right]^{\frac{3}{2}}
\] 
holds for almost every $h$ in $[0, \infty)$.

Let $Y$ be the unique smooth solution to 
\[
			Y'(h) = C_n \frac{2}{3 \sqrt{3}} \left[ \frac{1}{2} \left( \frac{{Y}(h)}{\omega_{n-1}}\right)^{\frac{n-2}{n-1}} - 1 \right]^{\frac{3}{2}} \quad \mbox{and} \quad Y(0) = 2\left(2^{\frac{n-1}{n-2}}\omega_{n-1}\right).
\]
Observe that the initial value is specifically chosen in the range such that the right hand side of the ODE is smooth in~$Y$, which explains the motivation behind the definition of $h_0$. Note that the construction of $Y$ is completely determined by $n$. We can now use our ODE comparison Lemma \ref{pr:volume-increasing} to conclude that for any $h\ge0$,
\[
	Y(h) \le \tilde{V}(h).
\] 

If $n\ge 5$, $Y(h)$ tends to infinity  at a finite height $C>0$ depending only on $n$. Therefore $\tilde{V}(h)$ must tend to infinity at a finite height $\tilde{h}_{\textup{max}} \le C$. By Lemma~\ref{pr:volume-increasing}, $\sup \tilde{f} < \tilde{h}_{\textup{max}}$ and thus $0 < \sup\tilde{f} < C$.  This implies the desired inequality for $\sup f$.

For $n=3$ or $4$, the solution $Y$ is finite for all $h\ge0$. To be more precise, we can see that for large $h$, $Y(h)$ grows like $h^4$ for $n=3$ and grows like an exponential function for $n=4$. Therefore there is a constant $C$ such that for all $h\ge0$,  $h\le C [Y(h)]^{\frac{1}{4}}$ for $n=3$, and $h\le C \log [Y(h)]$ for $n=4$. Since $Y(h) \le \tilde{V}(h)$, it follows that 
\begin{align*}
 h\le C [\tilde{V}(h)]^{\frac{1}{4}} &\text{ for }n=3\\
 h\le C \log [\tilde{V}(h)]&\text{ for }n=4.
\end{align*}
If we consider the rescaled height $\tilde{h} = m^{-\frac{1}{n-2}}(h-h_0)$, then the rescaled volume is $\tilde{V}(\tilde{h}) = m^{-\frac{n-1}{n-2}}V(h)$. The result follows.
\end{proof}

\section{Bounds in lower dimension} \label{se:lower-dimension}

In this section we prove the bounds needed for our main theorem in dimensions $3$ and $4$. This section is not used  in the proof of Theorem \ref{th:main} in dimensions $n\ge5$.

As mentioned earlier,  asymptotically flat graphs in lower dimension are not asymptotic to planes, and therefore we need a more subtle argument that requires a uniformity assumption on the asymptotics.

We start with a lemma\footnote{There is a version of this lemma for $n\ge5$, but we do not need it.} applying the strong maximum principle for the scalar curvature operator  \cite[Theorem 4.3]{Huang-Wu-Penrose} to bound $f$ outside a compact set by a vertical translation of the Schwarzschild function $S_m(|x|)$. 

\begin{lemma} \label{le:strong-maximum}
Let $n=3$ or $4$. Let $f\in C^{n+1}(\rr^n\setminus \Omega)$ be an asymptotically flat function of mass $m$, either entire or with minimal boundary, whose graph has nonnegative scalar curvature and upward pointing mean curvature vector field. 
 Let $S_m$ be the Schwarzschild function of $m>0$ defined in Example~\ref{example:Schwarzschild}, and assume that there exists some $\Lambda$ such that
 \[ f(x)-(\Lambda+S_m(|x|) )= \left\{ 
 \begin{array}{ll}
 o(|x|^{\frac{1}{2}})&\text{for }n=3\\
 o(\log|x|)&\text{for }n=4.
 \end{array}\right.
 \]   
Choose any $r_1> (2m)^{\frac{1}{n-2}}$ and any $h_1$ large enough that
 \[ B_{r_1} \subset \Omega_{h_1}.\] 
  Then for any $x\notin B_{r_1}$, 
\[
	\bar{f}(x)-h_1 \le S_m(|x|) -S_m(r_1),
\]
where $\bar{f}$ is the extension of $f$ to all of $\rr^n$ that is constant on each component of $\bar{\Omega}$.
\end{lemma}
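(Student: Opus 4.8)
The plan is to bound $\bar f$ from above not by a vertical translate of $S_m$ itself, but by vertical translates of Schwarzschild functions $S_{m+\delta}$ of slightly larger mass, and then to let $\delta\to 0^+$. Fix $\delta>0$ small enough that $r_1>(2(m+\delta))^{1/(n-2)}$, and set
\[
 g_\delta(x):=h_1-S_{m+\delta}(r_1)+S_{m+\delta}(|x|),\qquad |x|\ge r_1 .
\]
Since $r_1$ exceeds the horizon radius of $S_{m+\delta}$, the function $g_\delta$ is smooth on $\{|x|\ge r_1\}$; since $S_{m+\delta}$ solves the spherically symmetric zero--scalar--curvature ODE (Example~\ref{example:Schwarzschild}) and scalar curvature is invariant under vertical translation, the graph of $g_\delta$ has $R\equiv 0$; and by construction $g_\delta\equiv h_1$ on $\{|x|=r_1\}$. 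I would first prove that $\bar f\le g_\delta$ on $\{|x|\ge r_1\}$ for every such $\delta$, and then send $\delta\to 0^+$: since $S_{m+\delta}(r_1)\to S_m(r_1)$ and $S_{m+\delta}(|x|)\to S_m(|x|)$ for each fixed $x$ with $|x|>(2m)^{1/(n-2)}$, this gives exactly $\bar f(x)-h_1\le S_m(|x|)-S_m(r_1)$.

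To prove $\bar f\le g_\delta$ I would run a maximum--principle argument on the connected unbounded region $\{|x|\ge r_1\}$. On the inner boundary we have $\bar f\le h_1=g_\delta$, since $B_{r_1}\subset\Omega_{h_1}=\{\bar f<h_1\}$. At infinity, writing $\bar f=f$ for $|x|$ large, the key computation is
\[
 \bar f(x)-g_\delta(x)=\bigl[f(x)-\Lambda-S_m(|x|)\bigr]+\bigl(\Lambda-h_1+S_{m+\delta}(r_1)\bigr)-\bigl(S_{m+\delta}(|x|)-S_m(|x|)\bigr),
\]
in which the first bracket is $o(|x|^{1/2})$ (resp.\ $o(\log|x|)$) by hypothesis, the middle term is a constant, and --- reading off the explicit formulas of Example~\ref{example:Schwarzschild} --- the last term satisfies $S_{m+\delta}(|x|)-S_m(|x|)\ge c_\delta|x|^{1/2}$ (resp.\ $\ge c_\delta\log|x|$) for all large $|x|$, with $c_\delta>0$ because $\sqrt{m+\delta}>\sqrt m$. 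Hence $\bar f-g_\delta\to-\infty$ as $|x|\to\infty$; this is precisely where the growth hypotheses on $f$ enter.

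Now suppose for contradiction that $\bar f>g_\delta$ somewhere in $\{|x|>r_1\}$. By the two facts above, $M_\delta:=\sup_{\{|x|\ge r_1\}}(\bar f-g_\delta)$ is positive and attained at an interior point $p$, where $\bar f$ touches the vertical translate $g_\delta+M_\delta$ from below. One checks $p$ does not lie in the flat region $\Omega$ (there $D\bar f=0$, so touching from below would force $Dg_\delta(p)=0$, impossible for $|x|>r_1$) nor on the minimal boundary $\partial\Omega$ (there the graph of $\bar f$ drops off vertically). So near $p$ both functions are genuine graphical solutions with $R[\bar f]\ge 0=R[g_\delta+M_\delta]$ and both graphs weakly mean-convex (the graph of $\bar f$ by Theorem~\ref{th:mean-convexity}, the graph of $g_\delta$ being a piece of a Schwarzschild graph), so the strong maximum principle for the scalar curvature operator, \cite[Theorem 4.3]{Huang-Wu-Penrose}, forces $\bar f\equiv g_\delta+M_\delta$ near $p$. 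The coincidence set is open (apply the same principle at each of its points) and closed (by continuity) in the connected set $\{|x|>r_1\}$, hence equals all of it, contradicting $\bar f-g_\delta\to-\infty$. This yields $\bar f\le g_\delta$, and letting $\delta\to0^+$ completes the proof.

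The conceptual point --- and the reason the hypotheses are stated as $o(|x|^{1/2})$ and $o(\log|x|)$ --- is that one cannot run the argument with $S_m$ directly: $\bar f-S_m$ is only known to be $o(|x|^{1/2})$ (resp.\ $o(\log|x|)$), hence possibly unbounded, so the maximum principle on the unbounded exterior has nothing to work with. Adding mass $\delta$ manufactures a barrier whose gap over $S_m$ grows at the borderline rate $|x|^{1/2}$ (resp.\ $\log|x|$), which strictly dominates the $o$-error and drives the difference to $-\infty$. I expect the main technical obstacle to be a clean application of \cite[Theorem 4.3]{Huang-Wu-Penrose}: checking its hypotheses near $p$ (localizing to where the scalar curvature operator is uniformly elliptic, using the weak mean-convexity from Theorem~\ref{th:mean-convexity}) and then propagating the coincidence across all of $\{|x|>r_1\}$.
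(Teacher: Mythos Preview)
Your proposal is correct and follows essentially the same approach as the paper: compare $\bar f$ with a Schwarzschild barrier of slightly larger mass $m'=m+\delta$, use the growth hypothesis to force the difference to $-\infty$ at infinity, apply the strong maximum principle of \cite[Theorem~4.3]{Huang-Wu-Penrose} at an interior touching point to obtain a contradiction, and then let $\delta\to 0$. The paper organizes the contradiction slightly differently---it supposes the conclusion fails at one point and then perturbs $m$ there, rather than proving $\bar f\le g_\delta$ globally for each $\delta$---but the arguments are the same in substance.
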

\begin{figure}[top]  
   \centering 
   \includegraphics[width=1\textwidth]{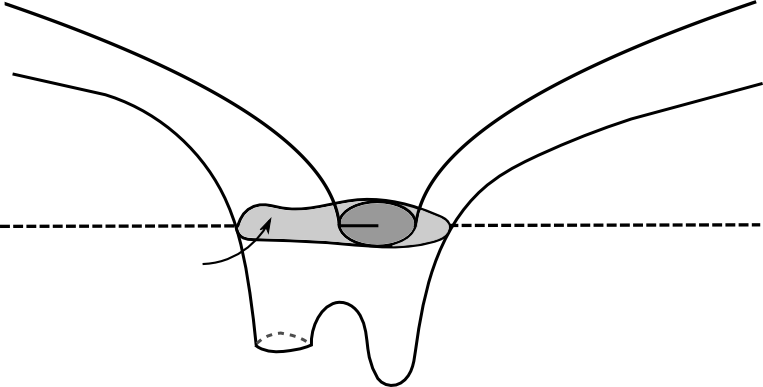}
   			\put(-335 , 110){$\small \textup{graph}[f(x)]$}
			\put(-185 , 170){\small$\textup{graph}[S_m(|x|) - S_m(r_1) + h_1]$}
			\put(-105,83) {$\small \{x^{n+1} = h_1 \}$}
			\put(-190, 80) {\tiny $r_1$}
			\put(-285, 53) {$\Omega_{h_1}$}
			\caption{The graph of $f(x)$ lies below the graph of $ S_m(|x|) -S_m(r_1) + h_1$, as shown in Lemma~\ref{le:strong-maximum}.}
			\label{figure:Schwarzschild}
\end{figure}
\begin{proof}
Suppose, to the contrary that $\bar{f}(x)-h_1>S_m(|x|) -S_m(r_1)$ for some $x\notin B_{r_1}$. Then it is also true that $\bar{f}(x)-S_{m'}(|x|)> h_1-S_{m'}(r_1)$ for some $m'$ slightly larger than $m$.  Let $a>h_1-S_{m'}(r_1)$ be the supremum of $\bar{f}(x)-S_{m'}(|x|)$ over the complement of $\Omega\cup B_{r_1}$. 

By our assumption that $B_{r_1} \subset \Omega_{h_1}$, it follows that $\bar{f}(x)\le h_1$ at $\partial B_{r_1}$, and hence  $\bar{f}(x)-S_{m'}(|x|)\le h_1-S_{m'}(r_1)<a$ at $\partial  B_{r_1}$. Therefore the supremum of $\bar{f}(x)-S_{m'}(|x|)$ cannot be achieved at $\partial B_{r_1}$. Meanwhile, our asymptotic assumption guarantees that the supremum is not achieved ``at infinity'' since $\bar{f}(x)-S_{m'}(|x|)$ approaches $-\infty$ as $|x|\to\infty$. Therefore a local maximum of $\bar{f}(x)-S_{m'}(|x|)$ is achieved at some $x'\notin \bar{B}_{r_1}$. It is clear that a local maximum cannot occur on $\Omega$. Away from $\Omega$, the graph of $f(x)$ has nonnegative scalar curvature and upward pointing mean curvature, and it touches the graph of $S_{m'}(|x|)+a$ (which has zero scalar curvature and upward pointing mean curvature) from below at the point $x'$. 
We can now invoke the strong maximum principle for the scalar curvature operator in  \cite[Theorem 4.3]{Huang-Wu-Penrose} to conclude that $f(x)= S_{m'}(|x|) +a$ on the complement of $\Omega\cup B_{r_1}$, which contradicts the fact that $f(x)$ is asymptotic to $S_m(|x|)$.
\end{proof}

Our goal in the next three technical lemmas is to obtain the radius $r_1$ for which we can bound the volume of the smallest level set that contain $B_{r_1}$ in terms of $r_1$,  under the uniformity assumption. 

\begin{lemma} \label{le:Schwarzschild-difference}
Assume $n=3$ and $\alpha<\frac{1}{2}$, or $n=4$ and $\alpha<0$. Let $1<a<b$ and $c>0$. There exists a constant $C(\gamma, \alpha, a, b, c)$ such that if 
\[
	r_1 \ge \left\{\begin{array}{ll}
		 \max\left( C(\gamma, \alpha, a, b, c) m^{- \frac{1}{1-2\alpha}}, 2m \right) &\text{for } n=3 \\
		  \max\left( C(\gamma, \alpha, a, b, c) m^{\frac{1}{2\alpha}}, \sqrt{2m} \right) &\text{for } n=4,
		 \end{array}\right.
\]
then 
\[
	\gamma (cr_1)^{\alpha} < S_m (b r_1) - S_m (a r_1).
\]	
\end{lemma}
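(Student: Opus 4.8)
The plan is to reduce everything to the explicit asymptotics of the Schwarzschild function $S_m$ from Example~\ref{example:Schwarzschild} and then chase the scaling in $m$. The target inequality is $\gamma(cr_1)^\alpha < S_m(br_1)-S_m(ar_1)$, and since $1<a<b$ are fixed, the right-hand side is a definite positive quantity; the whole point is that as $r_1\to\infty$ it dominates the power $r_1^\alpha$ on the left, because $\alpha$ is below the growth rate of $S_m$. So first I would record the two cases separately.

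\textbf{Case $n=3$.} Here $S_m(r)=\sqrt{8m(r-2m)}$, so
\begin{align*}
S_m(br_1)-S_m(ar_1)=\sqrt{8m}\left(\sqrt{br_1-2m}-\sqrt{ar_1-2m}\right).
\end{align*}
Assuming $r_1\ge 2m$ (which is one of the two conditions imposed, and which also makes the expressions real), one has $br_1-2m\ge (b-1)r_1$ say after possibly strengthening to $r_1\ge \frac{2}{b-1}\cdot 2m$, absorbed into the constant; more simply, for $r_1\ge 4m/(b-a)$ the gap $\sqrt{br_1-2m}-\sqrt{ar_1-2m}=\frac{(b-a)r_1}{\sqrt{br_1-2m}+\sqrt{ar_1-2m}}\ge c_1(a,b)\sqrt{r_1}$ for an explicit $c_1$. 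Hence $S_m(br_1)-S_m(ar_1)\ge c_1\sqrt{8m}\sqrt{r_1}=c_2(a,b)\sqrt{m r_1}$. It remains to guarantee $\gamma(cr_1)^\alpha < c_2\sqrt{m r_1}$, i.e. $r_1^{\frac12-\alpha}>\gamma c^\alpha c_2^{-1}m^{-1/2}$; since $\frac12-\alpha>0$ this is exactly $r_1> (\gamma c^\alpha c_2^{-1})^{\frac{1}{1/2-\alpha}}m^{-\frac{1}{1-2\alpha}}$, which has the advertised form $C(\gamma,\alpha,a,b,c)m^{-\frac{1}{1-2\alpha}}$. Taking $r_1$ above the max of this and $2m$ (or $4m/(b-a)$, folded into $C$) finishes this case.

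\textbf{Case $n=4$.} Here $S_m(r)=\sqrt{2m}\,\log\!\big(\tfrac{r}{\sqrt{2m}}+\sqrt{\tfrac{r^2}{2m}-1}\big)=\sqrt{2m}\,\operatorname{arccosh}(r/\sqrt{2m})$, valid for $r\ge\sqrt{2m}$ (the second imposed condition). Then $S_m(br_1)-S_m(ar_1)=\sqrt{2m}\big(\operatorname{arccosh}(br_1/\sqrt{2m})-\operatorname{arccosh}(ar_1/\sqrt{2m})\big)$. For $r_1\ge\sqrt{2m}$ the argument $ar_1/\sqrt{2m}\ge a>1$ stays in a regime where $\operatorname{arccosh}(bt)-\operatorname{arccosh}(at)\to\log(b/a)$ monotonically from below as $t\to\infty$, and in fact is $\ge \tfrac12\log(b/a)$ once $t\ge t_0(a,b)$, i.e. once $r_1\ge t_0\sqrt{2m}$. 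Thus $S_m(br_1)-S_m(ar_1)\ge \tfrac12\log(b/a)\sqrt{2m}=c_3(a,b)\sqrt{m}$ for $r_1\ge \max(\sqrt{2m},t_0\sqrt{2m})$. We need $\gamma(cr_1)^\alpha<c_3\sqrt{m}$; since $\alpha<0$, $r_1^\alpha$ is decreasing, so this reads $r_1^{-\alpha}>\gamma c^\alpha c_3^{-1}m^{-1/2}$, i.e. (recalling $-\alpha>0$, so $\frac1\alpha<0$) $r_1>(\gamma c^\alpha c_3^{-1})^{-1/\alpha}m^{-1/(2\alpha)}=C(\gamma,\alpha,a,b,c)\,m^{\frac{1}{2\alpha}}$, matching the stated form. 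Taking $r_1$ above the max of this and $\sqrt{2m}$ (with $t_0\sqrt{2m}$ absorbed) completes the proof.

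The only mildly delicate point — the "main obstacle," such as it is — is the elementary asymptotic estimate that $\operatorname{arccosh}(bt)-\operatorname{arccosh}(at)\ge\tfrac12\log(b/a)$ for $t$ past an explicit threshold depending only on $a,b$ (and its $n=3$ analogue for the square-root difference); this is just a one-variable calculus exercise — one checks the difference is increasing in $t$ and computes its limit — but it is where all the dependence of $C$ on $a,b$ is packaged, and one must be careful that the threshold on $t$ translates into a threshold on $r_1$ that is a \emph{constant times a power of $m$}, which it does precisely because the threshold is multiplied by $\sqrt{2m}$ (resp. compared against $2m$), consistent with the two "floor" conditions $r_1\ge 2m$ and $r_1\ge\sqrt{2m}$ already present in the statement. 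Everything else is bookkeeping of explicit constants.
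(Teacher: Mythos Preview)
Your approach is essentially the same as the paper's: bound $S_m(br_1)-S_m(ar_1)$ below by a constant times $\sqrt{m r_1}$ (for $n=3$) or $\sqrt{m}$ (for $n=4$), then solve the resulting power inequality for $r_1$. A couple of details are off but harmless.

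First, your auxiliary thresholds $r_1\ge 4m/(b-a)$ and $r_1\ge t_0\sqrt{2m}$ cannot literally be ``folded into $C$'' as you claim, since $C$ multiplies $m^{-1/(1-2\alpha)}$ (resp.\ $m^{1/(2\alpha)}$), a different power of $m$; for large $m$ these thresholds would dominate both terms in the stated $\max$. Fortunately they are unnecessary. For $n=3$, rationalizing gives
\[
\sqrt{br_1-2m}-\sqrt{ar_1-2m}=\frac{(b-a)r_1}{\sqrt{br_1-2m}+\sqrt{ar_1-2m}}\ge\frac{(b-a)r_1}{(\sqrt b+\sqrt a)\sqrt{r_1}}=(\sqrt b-\sqrt a)\sqrt{r_1}
\]
already for $r_1\ge 2m$ (this is exactly the paper's bound, obtained there via concavity of $\sqrt{\,\cdot\,}$). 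For $n=4$, your monotonicity claim has the wrong direction: $\operatorname{arccosh}(bt)-\operatorname{arccosh}(at)$ is \emph{decreasing} in $t$ to $\log(b/a)$, since $\frac{r+\sqrt{r^2-2m}}{r}$ is increasing; hence the difference is $>\log(b/a)$ for every $r_1\ge\sqrt{2m}$, with no $t_0$ needed. The paper records precisely this, obtaining $S_m(br_1)-S_m(ar_1)>\sqrt{2m}\log(b/a)$ directly. With these cleaner lower bounds in hand, your algebra for extracting the exponent of $m$ is correct and matches the paper.
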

\begin{proof}
First consider the $n=3$ case. As long as $r_1>2m$, we have
\begin{align*}
	S_m(br_1) - S_m(ar_1) &= \sqrt{8m}(\sqrt{br_1-2m} - \sqrt{ar_1-2m})\\
	&> \sqrt{8m} (\sqrt{br_1} - \sqrt{ar_1})\\
	&=\sqrt{8m} (\sqrt{b} -\sqrt{a})\sqrt{r_1},
\end{align*}
where the middle inequality follows from the fact that derivative of the square root function is positive and decreasing.

To find $r_1$ that satisfies $\gamma (cr_1)^{\alpha} <S_m (b r_1) - S_m (a r_1)$, it suffices to solve $r_1$ in the following inequality
\[
	\gamma (cr_1)^{\alpha} \le \sqrt{8m} (\sqrt{b} -\sqrt{a})\sqrt{r_1},
\]
which is equivalent to
\[
	(r_1)^{\frac{1}{2} - \alpha} \ge \left[\frac{\gamma c^{\alpha}}{\sqrt{8}(\sqrt{b} - \sqrt{a})}\right] m^{-\frac{1}{2}},
\] 
and the result follows.

For the $n=4$ case, as long as $r_1>\sqrt{2m}$, 
\begin{align*}
	S_m(br_1) - S_m(ar_1) &= \sqrt{2m}\log\left(\frac{ br_1 + \sqrt{(br_1)^2 -2m} }{ ar_1 + \sqrt{(ar_1)^2 -2m} }\right) \\
	 &> \sqrt{2m}\log\left(\frac{ br_1 }{ ar_1 }\right) \\
	 &= \sqrt{2m}\log\left(\frac{ b }{ a}\right),
\end{align*}
where the middle inequality follows from the fact that $\frac{r+\sqrt{r^2 - 2m}}{r}$ is strictly increasing for $r>2m$. Finding $r_1$ that solves  
\[
\gamma (cr_1)^{\alpha} \le \sqrt{2m}\log\left(\frac{ b }{ a}\right)
\]
yields the desired result.
\end{proof}

\begin{lemma}\label{le:round-level-sets}
Let  $n=3$ or $4$.  Let $f$ be a  uniformly $(r_0,\gamma,\alpha)$-asymptotically Schwarzschild function of mass $m>0$.  There exists a constant $C(\gamma,\alpha)$  such that if 
\[
	r_1 =\left\{\begin{array}{ll}
		 \max\left( C(\gamma, \alpha) m^{- \frac{1}{1-2\alpha}}, 2m, r_0 \right) &\text{for } n=3 \\
		  \max\left( C(\gamma, \alpha) m^{\frac{1}{2\alpha}}, \sqrt{2m}, r_0 \right) &\text{for } n=4,
		 \end{array}\right.
\]
then there exists $\epsilon>0$ such that 
\begin{align*}
 f(x)\le h_1-\epsilon &\mbox{ for } |x|={r_1}\\
f(x) \ge h_1+\epsilon&\mbox{ for }  |x|={3r_1},
\end{align*}
where $h_1=\Lambda + S_m(2r_1)$. As a consequence, for all $h \in (h_1 - \epsilon, h_1 + \epsilon)$, we have
\[
	B_{r_1} \subset \Omega_h \subset B_{3 r_1}.
\]
\end{lemma}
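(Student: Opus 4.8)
The plan is to obtain $r_1$ from two applications of Lemma~\ref{le:Schwarzschild-difference}, read the two pointwise bounds on $\{|x|=r_1\}$ and $\{|x|=3r_1\}$ off the uniformity estimate, and then prove the two inclusions by different means: an interior maximum principle for $B_{r_1}\subset\Omega_h$, and a global monotonicity estimate for $\Omega_h\subset B_{3r_1}$. First I would fix the radius. Applying Lemma~\ref{le:Schwarzschild-difference} with $(a,b,c)=(\tfrac32,2,1)$ gives, once $r_1$ exceeds a threshold of the stated form together with the horizon radius, $\gamma r_1^\alpha < S_m(2r_1)-S_m(\tfrac32 r_1)\le S_m(2r_1)-S_m(r_1)$, the last step by monotonicity of $S_m$. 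Applying it with $(a,b,c)=(2,3,3)$ gives $\gamma(3r_1)^\alpha<S_m(3r_1)-S_m(2r_1)$ for $r_1$ past a similar threshold. If $0\le\alpha<\tfrac12$ (possible only when $n=3$) I would enlarge $r_1$ once more, again by a constant times $m^{-1/(1-2\alpha)}$, so that $g(t):=S_m(t)-\gamma t^\alpha$ is nondecreasing on $[3r_1,\infty)$; this reduces to $S_m'(t)\ge\gamma\alpha t^{\alpha-1}$, which holds there because $S_m'(t)\ge\sqrt{2m}\,t^{-1/2}$. When $\alpha\le0$ --- in particular for every admissible $\alpha$ when $n=4$ --- monotonicity of $g$ is automatic, since $\gamma\alpha t^{\alpha-1}\le0<S_m'(t)$. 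Taking $C(\gamma,\alpha)$ to be the largest of the constants produced, and $r_1$ as in the displayed maximum (which also includes $r_0$), I set
$$\epsilon=\tfrac12\min\!\big(\,S_m(2r_1)-S_m(r_1)-\gamma r_1^\alpha,\ \ S_m(3r_1)-S_m(2r_1)-\gamma(3r_1)^\alpha\,\big)>0 .$$
Since $r_1\ge r_0$, the estimate $|f(x)-(\Lambda+S_m(|x|))|\le\gamma|x|^\alpha$ holds on $\{|x|\ge r_1\}$; evaluating it at $|x|=r_1$ gives $f(x)\le\Lambda+S_m(r_1)+\gamma r_1^\alpha=h_1-\big(S_m(2r_1)-S_m(r_1)-\gamma r_1^\alpha\big)\le h_1-2\epsilon$, and at $|x|=3r_1$ it gives $f(x)\ge h_1+2\epsilon$; these are the asserted inequalities.

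For the consequence, fix $h\in(h_1-\epsilon,h_1+\epsilon)$. For $\Omega_h\subset B_{3r_1}$: on $\{|x|\ge3r_1\}$, which lies outside the compact set where $\bar f\ne f$, we have $\bar f(x)=f(x)\ge\Lambda+S_m(|x|)-\gamma|x|^\alpha=\Lambda+g(|x|)\ge\Lambda+g(3r_1)=\Lambda+S_m(3r_1)-\gamma(3r_1)^\alpha\ge h_1+2\epsilon>h$, using the monotonicity of $g$; hence no point with $|x|\ge3r_1$ lies in $\Omega_h=\{\bar f<h\}$. For $B_{r_1}\subset\Omega_h$: by the maximum principle for the mean curvature operator, exactly as in the proof of Lemma~\ref{pr:volume-increasing}, $f$ attains no interior local maximum on $\rr^n\setminus\overline\Omega$ (its graph has upward-pointing mean curvature) and, by Corollary~\ref{co:mean-curvature-vector}, none at the minimal boundary; hence, using $\overline\Omega\subset B_{r_0}\subset B_{r_1}$ and that $\bar f$ equals the boundary value of $f$ on each component of $\overline\Omega$, $\sup_{\overline{B}_{r_1}}\bar f=\max_{\{|x|=r_1\}}f\le h_1-2\epsilon<h$, so $\overline{B}_{r_1}\subset\Omega_h$. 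Combining the two inclusions yields $B_{r_1}\subset\Omega_h\subset B_{3r_1}$ for every $h\in(h_1-\epsilon,h_1+\epsilon)$.

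I expect the outer inclusion $\Omega_h\subset B_{3r_1}$ to be the main obstacle. Unlike the inner one it cannot come from the PDE: a graph with upward-pointing mean curvature obeys a maximum principle but no minimum principle, so nothing in the equation prevents $f$ from dipping below $h$ far from the origin. The only mechanism ruling this out is the \emph{global} Schwarzschild comparison $|f-(\Lambda+S_m)|\le\gamma|\cdot|^\alpha$ on all of $\{|x|>r_0\}$ together with the eventual monotonicity of $S_m(t)-\gamma t^\alpha$; making the error term lose to the growth of $S_m$ is precisely what forces the threshold $r_1\ge C(\gamma,\alpha)m^{-1/(1-2\alpha)}$ (respectively $m^{1/(2\alpha)}$ for $n=4$), and the borderline subcase $0\le\alpha<\tfrac12$ with $n=3$ is the one spot that needs a short estimate on $S_m'$. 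The remaining ingredients --- the two applications of Lemma~\ref{le:Schwarzschild-difference} and the interior maximum principle --- are routine.
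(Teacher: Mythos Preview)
Your argument for the two pointwise inequalities follows the paper's proof almost verbatim: the paper applies Lemma~\ref{le:Schwarzschild-difference} with $(a,b,c)=(1,2,1)$ and $(2,3,3)$, sets $\epsilon$ equal to the minimum of the two resulting positive gaps (without your factor of $\tfrac12$), and reads off the bounds at $|x|=r_1$ and $|x|=3r_1$ exactly as you do. Your choice $(a,b,c)=(\tfrac32,2,1)$ in place of $(1,2,1)$ is a harmless variation that respects the stated hypothesis $a>1$ of Lemma~\ref{le:Schwarzschild-difference}.

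The substantive difference is that the paper's proof \emph{stops} after the two sphere inequalities and offers no justification for the ``consequence'' $B_{r_1}\subset\Omega_h\subset B_{3r_1}$. You correctly recognize that the pointwise bounds on the two spheres alone do not yield the inclusions, and you supply separate arguments for each. Your outer-inclusion argument via eventual monotonicity of $g(t)=S_m(t)-\gamma t^{\alpha}$ is clean and stays within the stated hypotheses (the extra threshold on $r_1$ needed when $0\le\alpha<\tfrac12$ is absorbed into $C(\gamma,\alpha)$). Your inner-inclusion argument via the maximum principle is also correct, but it imports hypotheses---nonnegative scalar curvature, upward-pointing mean curvature, and the entire or minimal-boundary structure needed even to define $\bar f$ and $\Omega_h$---that are not listed in Lemma~\ref{le:round-level-sets} (Definition~\ref{de:asymptotically-Schwarzschild} only specifies $f$ on $\rr^n\setminus B_{r_0}$). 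Those hypotheses do appear in Lemma~\ref{le:contains-ball}, where the inclusion is actually invoked, so your fix is the right one and the paper is simply loose about where the assumptions enter.
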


\begin{proof}
By assumption, there is a constant $\Lambda$ such that for all $|x|>r_0$, we have 
\[ 
	|f(x) -(\Lambda + S_m(|x|))|\le \gamma|x|^{\alpha}.
\]
Using the constant $C(\gamma, \alpha, a, b, c)$ from Lemma~\ref{le:Schwarzschild-difference}, we choose 
\[ C(\gamma,\alpha):=\max \left(  C(\gamma, \alpha, 1, 2, 1), C(\gamma,\alpha, 2, 3, 3)\right),\]
and define $r_1$ as in the statement of the lemma.

We claim that lemma holds with 
\[
	\epsilon = \min \left( S_m(2r_1) - S_m(r_1) - \gamma r_1^{\alpha}, S_m(3r_1) - S_m(2r_1) - \gamma (3 r_1)^{\alpha}\right).
\] 
First note that $\epsilon>0$ by Lemma~\ref{le:Schwarzschild-difference} (with $a=1, b=2, c=1$ for the first term and with $a=2, b=3, c=3$ for the second term). When $|x|=r_1$, we compute 
\begin{align*} 
f(x)-h_1 &= f(x) -(\Lambda + S_m(2r_1)) \\
& = [f(x) -(\Lambda + S_m(|x|))] + [S_m(r_1)-S_m(2r_1)] \\
&\le \gamma r_1^{\alpha} +  [S_m(r_1)-S_m(2r_1)]\\
&\le -\epsilon.
\end{align*}
Similarly, when $|x|=3r_1$, we have
\begin{align*} 
f(x)-h_1 &= f(x) -(\Lambda + S_m(2r_1)) \\
& = [f(x) -(\Lambda + S_m(|x|))] + [S_m(3r_1)-S_m(2r_1)] \\
&\ge -\gamma (3r_1)^{\alpha} +  [S_m(3r_1)-S_m(2r_1)]\\
&\ge\epsilon.
\end{align*}
\end{proof}

\begin{lemma}\label{le:contains-ball}
Let $n=3$ or $4$.  Let $f$ be a $C^{n+1}$ uniformly $(r_0,\gamma,\alpha)$-asymptotically Schwarzschild function of mass $m$, either entire or with minimal boundary, whose graph has nonnegative scalar curvature.   Suppose  $\Sigma_h$ is outward-minimizing for $h$ in a dense subset. Then there exists a constant $C(\gamma, \alpha)$ and a height $h_1$ such that 
\[ B_{r_1}\subset \Omega_{h_1},\]
and

	\[
	V(h_1) \le \left\{\begin{array}{ll}
		 C(\gamma, \alpha) \max\left( m^{- \frac{2}{1-2\alpha}}, m^2, r_0^2  \right) &\text{for } n=3 \\
		C(\gamma, \alpha) \max\left(  m^{\frac{3}{2\alpha}}, m^{\frac{3}{2}}, r_0^3  \right) &\text{for } n=4,
		 \end{array}\right.
		 \]
where $r_1$ is the radius defined in Lemma~\ref{le:round-level-sets}.
\end{lemma}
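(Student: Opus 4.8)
The plan is to combine Lemma~\ref{le:round-level-sets} with the inclusion $\Omega_{h_1}\subset B_{3r_1}$ and the outward-minimizing hypothesis to control $V(h_1)$ by the perimeter of a ball of radius $3r_1$. First I would invoke Lemma~\ref{le:round-level-sets} to produce the radius $r_1$ (with the constant $C(\gamma,\alpha)$ defined there) and the height $h_1=\Lambda+S_m(2r_1)$, together with the $\epsilon>0$ such that $B_{r_1}\subset\Omega_h\subset B_{3r_1}$ for all $h\in(h_1-\epsilon,h_1+\epsilon)$. In particular $B_{r_1}\subset\Omega_{h_1}$, which is the first assertion. For the volume bound, the idea is that since $\Omega_h$ is contained in the ball $B_{3r_1}$ for a dense set of values $h$ near $h_1$ at which $\Sigma_h$ is outward-minimizing, we may compare $\Omega_h$ with the larger set $B_{3r_1}$: by definition of outward-minimizing, $V(h)=|\partial^*\Omega_h|\le|\partial^* B_{3r_1}|=\omega_{n-1}(3r_1)^{n-1}$. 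Then left lower semicontinuity of $V$ (established just before Lemma~\ref{pr:volume-increasing}) together with the density of such $h$ passes this bound to $h=h_1$, giving $V(h_1)\le\omega_{n-1}(3r_1)^{n-1}=3^{n-1}\omega_{n-1}r_1^{n-1}$.

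It then remains to substitute the explicit expression for $r_1$ from Lemma~\ref{le:round-level-sets}. Since $r_1=\max(C(\gamma,\alpha)m^{-1/(1-2\alpha)},2m,r_0)$ for $n=3$ and $r_1=\max(C(\gamma,\alpha)m^{1/(2\alpha)},\sqrt{2m},r_0)$ for $n=4$, raising to the power $n-1$ and using $\max(a,b,c)^p\le a^p+b^p+c^p\le 3\max(a^p,b^p,c^p)$ for $p>0$ yields, after absorbing numerical factors and powers of the old $C(\gamma,\alpha)$ into a new constant (still called $C(\gamma,\alpha)$), exactly the stated bounds: $V(h_1)\le C(\gamma,\alpha)\max(m^{-2/(1-2\alpha)},m^2,r_0^2)$ for $n=3$ and $V(h_1)\le C(\gamma,\alpha)\max(m^{3/(2\alpha)},m^{3/2},r_0^3)$ for $n=4$. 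Note that the hypotheses of Lemma~\ref{le:round-level-sets} are met: $f$ is uniformly $(r_0,\gamma,\alpha)$-asymptotically Schwarzschild of mass $m$, and although that lemma is stated without the scalar curvature/mean curvature hypotheses, those are harmless extra assumptions here; the only place the present lemma's extra hypotheses enter is in guaranteeing (via Lemma~\ref{pr:volume-increasing}, or directly) that $V$ is finite and that comparing with $B_{3r_1}$ via outward-minimizing is legitimate.

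The one genuine subtlety — and the step I expect to be the main obstacle — is the passage from ``$\Omega_h\subset B_{3r_1}$ and $\Sigma_h$ outward-minimizing for a dense set of $h$ near $h_1$'' to the bound at $h_1$ itself. One must make sure that the dense set of good values can be taken \emph{below} $h_1$ (or that one uses left continuity correctly), since $V$ is only \emph{left} lower semicontinuous; concretely, pick $h_j\uparrow h_1$ with each $\Sigma_{h_j}$ outward-minimizing and $\Omega_{h_j}\subset B_{3r_1}$, apply the outward-minimizing comparison to get $V(h_j)\le\omega_{n-1}(3r_1)^{n-1}$, and then $V(h_1)\le\liminf_j V(h_j)\le\omega_{n-1}(3r_1)^{n-1}$ by left lower semicontinuity. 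The inclusion $\Omega_{h_j}\subset B_{3r_1}$ for $h_j$ slightly below $h_1$ is exactly what the interval $(h_1-\epsilon,h_1+\epsilon)$ in Lemma~\ref{le:round-level-sets} provides, so this goes through; everything else is bookkeeping.
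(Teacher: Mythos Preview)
Your proposal is correct and follows essentially the same route as the paper: invoke Lemma~\ref{le:round-level-sets} to trap level sets in the annulus $B_{r_1}\subset\Omega_h\subset B_{3r_1}$, then use the outward-minimizing property to bound $V$ by $|\partial B_{3r_1}|=\omega_{n-1}(3r_1)^{n-1}$, and finally substitute the formula for $r_1$.

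The only difference is that you keep $h_1=\Lambda+S_m(2r_1)$ fixed and pass the bound to it via a sequence $h_j\uparrow h_1$ and left lower semicontinuity of $V$, whereas the paper simply \emph{redefines} $h_1$ to be any value in $(h_1-\epsilon,h_1+\epsilon)$ at which $\Sigma_{h_1}$ is itself outward-minimizing (such values exist by density). Since the statement only asserts existence of \emph{some} height $h_1$, the paper's choice avoids the limiting argument entirely and gives the bound in one line. Your limiting argument is valid but unnecessary.
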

\begin{proof}
Clearly, there exists a regular value $h_1$ for which Lemma \ref{le:round-level-sets} implies that $\Sigma_{h_1}$ lies in the annulus $r_1 < |x| < 3r_1$ and $\Sigma_{h_1}$ is outward-minimizing. In particular, $B_{r_1}\subset \Omega_{h_1}$. Meanwhile, the outward-minimizing property implies that  $V(h_1) \le |\partial B_{3r_1}|$, and the result follows. 
\end{proof}

\begin{theorem}\label{th:lower-dim-bound}
Let $n=3$ or $4$.  Let $f$ be a $C^{n+1}$  uniformly $(r_0,\gamma,\alpha)$-asymptotically flat function defined on $\rr^n\setminus \Omega$, either entire or with minimal boundary, whose graph has nonnegative scalar curvature. Assume $m>0$, and that $\Sigma_h$ is strictly mean-convex and outward-minimizing for almost every $h$. Then there exists a constant $C$ depending only on $(r_0,\gamma,\alpha)$ such that  for $\rho>0$ and any $x\in B_\rho$, we have
 \[ \bar{f}(x)-h_0 \le 
  \left\{ \begin{array}{ll}
C (m^{\frac{-\alpha}{1-2\alpha}} + m+\sqrt{m\rho})& \text{for }n=3\\
 C \sqrt{m}( |\log m|+|\log\rho|+1) & \text{for }n=4,
 \end{array}\right.
 \]
 where $\bar{f}$ is the extension of $f$ such that $\bar{f}$ is constant on each component of $\overline{\Omega}$.
\end{theorem}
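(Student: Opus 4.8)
The plan is to combine the height bound of Theorem~\ref{th:maximum} (which controls $h-h_0$ in terms of $V(h)$) with the Schwarzschild comparison of Lemma~\ref{le:strong-maximum} (which controls $\bar f(x)$ above a reference height $h_1$), using Lemma~\ref{le:contains-ball} to produce a specific $h_1$ with $B_{r_1}\subset\Omega_{h_1}$ and an explicit bound on $V(h_1)$ in terms of $m$ and $r_0$. The chain of inequalities is: for $x\in B_\rho$, first pass from $\bar f(x)$ to $h_1$ (either $x\in\Omega_{h_1}$ already, so $\bar f(x)\le h_1$, or use Lemma~\ref{le:strong-maximum} to get $\bar f(x)-h_1\le S_m(|x|)-S_m(r_1)$); then bound $h_1-h_0$ using Theorem~\ref{th:maximum} applied at $h=h_1$, i.e.\ $h_1-h_0\le Cm^{1/(n-2)}[\tilde V(\tilde h_1)]^{1/4}=C\sqrt m\,[V(h_1)]^{1/4}$ for $n=3$ and $h_1-h_0\le C\sqrt m\log[m^{-3/2}V(h_1)]$ for $n=4$. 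Substituting the bound on $V(h_1)$ from Lemma~\ref{le:contains-ball} and the estimate $S_m(|x|)-S_m(r_1)$ for $|x|\le\rho$, then adding the two contributions, gives the stated inequality once one checks that every term that appears is dominated by one of the terms on the right-hand side.

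First I would fix $r_1$ and $h_1$ as in Lemmas~\ref{le:round-level-sets} and~\ref{le:contains-ball}; note that the hypotheses of those lemmas (uniformly $(r_0,\gamma,\alpha)$-asymptotically Schwarzschild, nonnegative scalar curvature, outward-minimizing level sets in a dense set) and of Lemma~\ref{le:strong-maximum} (in particular the $o(|x|^{1/2})$ resp.\ $o(\log|x|)$ decay of $f(x)-(\Lambda+S_m(|x|))$, which follows from the uniform bound $\gamma|x|^\alpha$ since $\alpha<1/2$ resp.\ $\alpha<0$) and of Theorem~\ref{th:maximum} are all in force here. By Theorem~\ref{th:mean-convexity} and the asymptotically-Schwarzschild assumption, the graph has upward-pointing mean curvature vector, so Corollary~\ref{co:mean-curvature-vector} and all the volume lemmas apply. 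Then I would split into two cases for $x\in B_\rho$: if $|x|\le r_1$ then $x\in B_{r_1}\subset\Omega_{h_1}$ so $\bar f(x)\le h_1$ and we only need the $h_1-h_0$ bound; if $r_1<|x|\le\rho$ (which forces $\rho>r_1$), Lemma~\ref{le:strong-maximum} gives $\bar f(x)-h_1\le S_m(|x|)-S_m(r_1)\le S_m(\rho)$, so $\bar f(x)-h_0\le S_m(\rho)+(h_1-h_0)$.

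For the $h_1-h_0$ term, plug the bound on $V(h_1)$ from Lemma~\ref{le:contains-ball} into Theorem~\ref{th:maximum}: for $n=3$, $[V(h_1)]^{1/4}\le C(\gamma,\alpha)^{1/4}\max(m^{-1/(2(1-2\alpha))},\sqrt m,\sqrt{r_0})$, so $h_1-h_0\le C\sqrt m\max(m^{-1/(2(1-2\alpha))},\sqrt m,\sqrt{r_0})\le C(m^{-\alpha/(1-2\alpha)}+m+\sqrt{mr_0})$ after distributing, and since $r_0$ is absorbed into the constant this is $\le C(m^{-\alpha/(1-2\alpha)}+m)$ up to constants depending on $(r_0,\gamma,\alpha)$. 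For $n=4$, $\log[m^{-3/2}V(h_1)]\le\log[C(\gamma,\alpha)\max(m^{3/(2\alpha)-3/2},1,m^{-3/2}r_0^3)]\le C(|\log m|+1)$, hence $h_1-h_0\le C\sqrt m(|\log m|+1)$. For the $S_m(\rho)$ term, $S_m(\rho)=\sqrt{8m(\rho-2m)}\le 2\sqrt{2m\rho}$ for $n=3$, giving the $\sqrt{m\rho}$ summand; and $S_m(\rho)=\sqrt{2m}\log(\rho/\sqrt{2m}+\sqrt{\rho^2/(2m)-1})\le C\sqrt m(|\log\rho|+|\log m|+1)$ for $n=4$, which is already of the claimed form. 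Adding the two contributions and collecting like terms yields the asserted bound.

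\textbf{Main obstacle.} The substantive bookkeeping is making sure the various ``$\max$'' expressions and $r_0$-dependent pieces genuinely collapse into the clean right-hand side of the statement, i.e.\ verifying that $\sqrt m\cdot m^{-1/(2(1-2\alpha))}=m^{-\alpha/(1-2\alpha)}$ (using $\tfrac12-\tfrac{1}{2(1-2\alpha)}=\tfrac{-\alpha}{1-2\alpha}$) and that each remaining term ($\sqrt m\cdot\sqrt m$, $\sqrt m\cdot\sqrt{r_0}$, the $r_0^3$ term inside the logarithm, etc.) is dominated by one of $m^{-\alpha/(1-2\alpha)}$, $m$, $\sqrt{m\rho}$ for $n=3$, or by $\sqrt m(|\log m|+|\log\rho|+1)$ for $n=4$, uniformly as $m\to0$ and for all $\rho>0$. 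A secondary point to handle carefully is the case $\rho\le r_1$ (so that $S_m(\rho)$ never enters) versus $\rho>r_1$, and the fact that Theorem~\ref{th:maximum} is stated for regular values $h>h_0$, so one should take $h_1$ to be a regular value (as Lemma~\ref{le:contains-ball} allows) and note $h_1>h_0$ since $V(h_1)>2\omega_{n-1}(2m)^{(n-1)/(n-2)}$ is not guaranteed — if $h_1\le h_0$ the left side is $\le0$ and there is nothing to prove, so one may assume $h_1>h_0$.
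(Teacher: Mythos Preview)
Your proposal is correct and follows essentially the same route as the paper: choose $h_1$ and $r_1$ via Lemmas~\ref{le:round-level-sets} and~\ref{le:contains-ball}, bound $h_1-h_0$ by feeding the volume estimate on $V(h_1)$ into Theorem~\ref{th:maximum}, and for $|x|>r_1$ bound $\bar f(x)-h_1$ by $S_m(|x|)-S_m(r_1)\le S_m(\rho)$ via Lemma~\ref{le:strong-maximum}. The bookkeeping you flag as the main obstacle is exactly what the paper absorbs into the running constant $C(r_0,\gamma,\alpha)$ (in particular $\sqrt{m r_0}\le C(r_0)\sqrt m$, and $\sqrt m\le m^{-\alpha/(1-2\alpha)}$ for $m<1$ since $-\alpha/(1-2\alpha)\le\tfrac12$ always); your treatment of the edge case $h_1\le h_0$ is a nice addition that the paper omits, though note that in that case the inequality for $r_1<|x|<\rho$ still requires the $S_m(\rho)$ contribution, not just that the left side is nonpositive.
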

\begin{proof}
In this proof, the constant $C$ is assumed to depend on $(r_0,\gamma,\alpha)$ and may change from line to line.
We choose $h_1$ as in Lemma \ref{le:contains-ball}.  Combining Theorem \ref{th:maximum} with the volume bound on $V(h_1)$ from Lemma \ref{le:contains-ball}, we find that 
\begin{align*}
h_1-h_0  &\le \left\{ \begin{array}{ll}
 C\sqrt{m}[V(h_1)]^{\frac{1}{4}} & \text{for }n=3\\
 C \sqrt{m} \log[m^{-\frac{3}{2}} V(h_1)]& \text{for }n=4
 \end{array}\right.\\
 &\le \left\{ \begin{array}{ll}
C \max( m^{\frac{-\alpha}{1-2\alpha}}, m, \sqrt{m}) & \text{for }n=3\\
 C \sqrt{m}( |\log m|+1) & \text{for }n=4
 \end{array}\right.\\
 &\le \left\{ \begin{array}{ll}
C (m^{\frac{-\alpha}{1-2\alpha}} + m) & \text{for }n=3\\
 C \sqrt{m}( |\log m|+1) & \text{for }n=4.
 \end{array}\right.
 \end{align*} 
 From Lemma \ref{le:contains-ball}, we know $B_{r_1}\subset \Omega_{h_1}$ for the $r_1$ defined in Lemma \ref{le:round-level-sets}. In particular, $f(x)-h_0\le h_1-h_0$ for $|x|\le r_1$. For $r_1<|x|<\rho$, we can use the bound from Lemma \ref{le:strong-maximum} to compute
 \begin{align*} 
 \bar{f}(x)-h_1 &\le S_m(|x|)-S_m(r_1)\\
 & \le S_m(|x|)\\
 & < S_m(\rho)\\
 & < 
  \left\{ \begin{array}{ll}
C \sqrt{m\rho} & \text{for }n=3\\
 C \sqrt{m}( |\log \rho|+|\log m|) & \text{for }n=4.
 \end{array}\right.
 \end{align*}
 The result follows.
\end{proof}

\section{Convergence in the flat norm}\label{flat-norm}

We first recall the definition of flat distance in an open subset of $\rr^{n+1}$.

\begin{definition}
 Let $U$ be an open subset of $\mathbb{R}^{n+1}$, and let $T$ be a $k$-current in $\rr^{n+1}$. Denote by $\mathbf{M}_U$ the mass of a current in $U$. The \emph{flat norm} of $T$ in $U$ is defined by
\[
	F_U( T) = \inf\{ \mathbf{M}_U(A) + \mathbf{M}_U(B) : T = A + \partial B \mbox{ \rm{in} } U\}
\]
where the infimum is taken among $k$-currents $A$ and $(k+1)$-currents $B$ in $\rr^{n+1}$. The \emph{flat distance} between two $k$-currents $T_1, T_2$ is defined by
\[
	d_{F_U}(T_1, T_2) = F_U( T_1 - T_2).
\]
\end{definition}

Recall the height $h_0$ in Definition~\ref{de:horizon-height}.
\begin{theorem}\label{th:geq5}
Let $n\ge 5$. Let $U$ be a ball of radius $\rho$ in $\rr^{n+1}$. Let $f$ be a $C^{n+1}$ asymptotically flat function, either entire or with minimal boundary, whose graph has nonnegative scalar curvature. Assume $m>0$, and that  $\Sigma_h$ is strictly mean-convex and outward-minimizing for almost every $h$. Then 
	\[ d_{F_{U}} (\mathrm{graph} [f], \{ x^{n+1} = h_0 \}) \le c(n)[ (\rho+1)m^\frac{n}{n-2} + \rho^{n}m^\frac{1}{n-2}], \]
where the two graphs are chosen to have consistent orientations (pointing upward, say).
\end{theorem}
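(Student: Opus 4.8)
The plan is to estimate the flat distance by exhibiting an explicit filling $B$ between the two currents $\mathrm{graph}[f]$ and $\{x^{n+1}=h_0\}$ inside the ball $U$, together with an error current $A$ supported where the filling argument breaks down. Geometrically, the region between the graph of $f$ and the horizontal plane $\{x^{n+1}=h_0\}$, restricted to $U$, is a natural candidate for $B$: it is the $(n+1)$-current obtained by integrating over $\{(x,t): h_0 \le t \le \bar f(x)\}\cap U$ (using $\bar f$, the constant extension across $\bar\Omega$, and recalling from Lemma~\ref{pr:volume-increasing} that $f<h_{\max}$ so this set lies above the plane). Its boundary is $\mathrm{graph}[\bar f] - \{x^{n+1}=h_0\}$ plus side contributions coming from $\partial U$ and from the minimal boundary (where $|Df|\to\infty$); those side contributions become the current $A$.

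First I would bound $\mathbf{M}_U(B)$: this is the $(n+1)$-volume of the slab between the graph and the plane inside the cylinder over $B_\rho\subset\rr^n$, which is at most $\int_{B_\rho}(\bar f(x)-h_0)_+\,dx$. By Theorem~\ref{th:maximum} (the $n\ge 5$ case), $\sup f - h_0 < Cm^{1/(n-2)}$, so this integral is at most $\omega_{n-1}\rho^n \cdot Cm^{1/(n-2)}$ up to dimensional constants, giving the $\rho^n m^{1/(n-2)}$ term. Second I would bound $\mathbf{M}_U(A)$, the mass of the ``extra'' boundary pieces. The piece over $\partial B_\rho$ is a vertical annular region of height $\le \sup f - h_0 \le Cm^{1/(n-2)}$ over an $(n-1)$-sphere of radius $\rho$, contributing $\lesssim \rho^{n-1}m^{1/(n-2)}$, which is absorbed into the stated bound. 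The piece coming from the part of $\mathrm{graph}[f]$ or the plane that lies over $\Omega$ (the hole) must be controlled using the isoperimetric-type bound on $|\Omega|$: since $\Sigma_h$ is outward-minimizing for $h$ in a dense set and $V$ is nondecreasing with $V(h_0)\le 2\omega_{n-1}(2m)^{(n-1)/(n-2)}$ (Definition~\ref{de:horizon-height}), the isoperimetric inequality forces $|\Omega_{h_0}| \lesssim V(h_0)^{n/(n-1)} \lesssim m^{n/(n-2)}$; filling that hole (in the plane, or capping the minimal boundary) costs at most $\lesssim (1+\rho)m^{n/(n-2)}$ after accounting for the height, which is the $(\rho+1)m^{n/(n-2)}$ term. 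One also has to handle the portion of $\mathrm{graph}[f]$ outside $U$ but this is automatic since $F_U$ only sees masses in $U$.

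The main obstacle I expect is the careful bookkeeping of the boundary of the slab current near the minimal boundary and near $\Omega$: because $|Df|\to\infty$ at $\partial\Omega$, the graph becomes vertical there, and one must verify that $\partial(\text{slab})$ really equals $\mathrm{graph}[f]-\{x^{n+1}=h_0\}$ up to a controlled current supported over $\bar\Omega$ and over $\partial U$, rather than having uncontrolled mass. The clean way to do this is to work with the extension $\bar f$ and the sets $\Omega_h$ directly, write $B = \big(\llbracket\{(x,t):h_0<t<\bar f(x)\}\rrbracket\big)\llcorner U$ as an integral current, compute $\partial B$ via the constancy theorem / slicing, and identify the error current $A := \partial B - (\mathrm{graph}[f]-\{x^{n+1}=h_0\})\llcorner U$ as supported on $(\partial U \cap \overline{\{h_0<t<\bar f\}}) \cup (\overline{\Omega}\times[h_0,\sup f])$, then estimate $\mathbf{M}_U(A)$ by the two contributions above. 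Combining the bounds on $\mathbf{M}_U(A)$ and $\mathbf{M}_U(B)$ and absorbing lower-order terms into the constant $c(n)$ yields the claimed inequality.
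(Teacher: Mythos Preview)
Your overall strategy---fill the gap between $\mathrm{graph}[\bar f]$ and the plane by a slab $B$ and absorb the rest into a small $n$-current $A$---is exactly the paper's. However, there is a genuine gap in your treatment of the region where the graph lies \emph{below} the plane $\{x^{n+1}=h_0\}$. Your filling $B=\{(x,t):h_0\le t\le \bar f(x)\}$ only captures the part where $\bar f\ge h_0$; nothing in the hypotheses forces $\bar f\ge h_0$, and in fact the graph may dip far below $h_0$ (deep wells, or the minimal boundary). With your one-sided $B$, the leftover current $A$ is \emph{not} supported only on $(\partial U\cap\overline{\{h_0<t<\bar f\}})\cup(\overline{\Omega}\times[h_0,\sup f])$: it must also carry the piece of $\mathrm{graph}[f]$ and of the plane that sit over $\Omega_{h_0}\setminus\Omega$. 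The plane piece over $\Omega_{h_0}$ has area $|\Omega_{h_0}|\lesssim m^{n/(n-2)}$, as you say, but the \emph{graph} piece has $n$-area $\int_{\Omega_{h_0}\setminus\Omega}\sqrt{1+|Df|^2}\,dx$, and since $|Df|$ is completely uncontrolled there (indeed $|Df|\to\infty$ at $\partial\Omega$), this is not bounded by $|\Omega_{h_0}|$. Your ``after accounting for the height'' remark seems to want a factor of $\rho$ times an $n$-volume, but that is an $(n+1)$-volume estimate, which belongs in $B$, not $A$.

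The paper's fix is precisely this reallocation: take $B$ to be the full \emph{signed} region between $\mathrm{graph}[\bar f]$ and $\{x^{n+1}=h_0\}$, written as $B=B_++B_-$, where $B_-$ is the (negatively oriented) region above the graph and below height $h_0$. Then $A$ reduces to just the horizontal caps over $\Omega$, with $\mathbf{M}_U(A)\le c(n)|\partial\Omega|^{n/(n-1)}\le c(n)m^{n/(n-2)}$ by the isoperimetric and Penrose inequalities. The new term $\mathbf{M}_U(B_-)$ is an $(n+1)$-volume and is bounded by horizontal slicing: for a.e.\ $h\le h_0$ the slice is $\Omega_h\cap U$, of area $\le c(n)V(h)^{n/(n-1)}\le c(n)V(h_0)^{n/(n-1)}\le c(n)m^{n/(n-2)}$ by the isoperimetric inequality and monotonicity of $V$; integrating over $h\in[-\rho,\rho]$ yields $\mathbf{M}_U(B_-)\le c(n)\rho\,m^{n/(n-2)}$, which is the $(\rho+1)m^{n/(n-2)}$ term. (Incidentally, there is no need to restrict $B$ to $U$ and track the $\partial U$ contribution: the decomposition $M-\{x^{n+1}=h_0\}=A+\partial B$ need only hold when tested against forms supported in $U$, and $\mathbf{M}_U$ ignores anything on $\partial U$ since $U$ is open.)
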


\begin{figure}[top] 
   \centering 
   \includegraphics[width=1\textwidth]{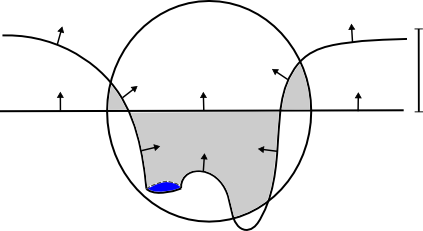}
   			\put(-120 , 175){$U$}
   			\put(-360 , 180){$\textup{graph}[f]$}
			\put(-200 , 80){$B_-$}
			\put(-115 ,110){$B_+$}
			\put(5,130) {$\le Cm^{\frac{1}{n-2}}$}
			\put(-360,85) {$\{x^{n+1} = h_0 \}$}
			\put(-223,22) {$A$}
			\caption{}
			 \label{figure}
\end{figure} 
\begin{proof}
Without loss of generality, we may assume that $f$ has upward pointing mean curvature vector. Let $M$ be the graph of $f$. We want to choose currents $A$ and $B$ such that $M- \{ x^{n+1} = h_0 \} = A+\partial B$ in $U$, where the two graphs are assumed to have upward orientation. Each component of $\partial M$ bounds a region of a horizontal plane. We define $A$ to be the sum of these regions, taken with downward orientation, so that $M - A$ is the graph of $\bar{f}$, where recall that $\bar{f}$ is the extension of $f$ such that $\bar{f}$ is constant on each component of $\overline{\Omega}$.  That is, $M-A$ is just $M$ with the boundaries ``filled in.'' We can then define $B$ to be the region of $\rr^{n+1}$ lying under $M- A$, minus the region of $\rr^{n+1}$ lying under  $\{x^{n+1} = h_0 \}$, both taken with positive orientation. 
Clearly, $(M-A) -\{ x^{n+1} = h_0 \}   =\partial B$. Note that we can think of $B=B_+ + B_-$,  
where $B_+$ is the region of $\rr^{n+1}$ with positive orientation that is below $M-A$ and above height $h_0$, and $B_-$ is the region of $\rr^{n+1}$ with negative orientation that is above $M-A$ and below height $h_0$. Since they are disjoint, $\mathbf{M}_U(B) =\mathbf{M}_U(B_+) +\mathbf{M}_U(B_-)$. See Figure \ref{figure}.

Note that since  $A$, $B_+$, and $B_-$ are submanifolds with multiplicity one, their masses in $U$ are the same as the volumes of their intersections with~$U$.
By the isoperimetric inequality combined with the Penrose inequality \cite{Lam}, we know that $\mathbf{M}_U(A)\le c(n)|\partial\Omega|^{\frac{n}{n-1}}  \le c(n) m^{\frac{n}{n-2}}$ for some $c(n)$. By Theorem~\ref{th:maximum}, it is clear that $\mathbf{M}_U(B_+)\le c(n)\rho^{n}m^{\frac{1}{n-2}}$ for some $c(n)$. Meanwhile, for almost every $h\le h_0$, we estimate the slices of $B_-\cap U$ using the isoperimetric inequality as follows:
 \begin{align*}
 \text{Vol} (B_-\cap U \cap\{x^{n+1}=h\})
& = \text{Vol} (\Omega_h\cap U)\\
 &\le  c(n) V(h)^{\frac{n}{n-1}}\\
&\le  c(n)V(h_0)^{\frac{n}{n-1}}\\
&\le c(n)  m^{\frac{n}{n-2}},\\
\end{align*}
where we used the definition of $h_0$ and the fact that $V$ is nondecreasing. Therefore
\[ \mathbf{M}_U(B_-) = \int_{-\rho}^{\rho}  \text{Vol} (B_-\cap U \cap\{x^{n+1}=h\}) \, dh\le c(n)  \rho m^{\frac{n}{n-2}}.\]
This completes the proof.
\end{proof}

\begin{theorem} \label{th:lower-dimension}
Let $n= 3$ or $4$. Let $U$ be a ball of radius $\rho$ in $\rr^{n+1}$. Let $f$ be a $C^{n+1}$  uniformly $(r_0,\gamma,\alpha)$-asymptotically flat function, either entire or with minimal boundary, whose graph has nonnegative scalar curvature. Assume $m>0$, and that  $\Sigma_h$ is strictly mean-convex and outward-minimizing for almost every $h$.  Then 
\begin{gather*}
	 d_{F_{U}} (\mathrm{graph} [f], \{ x^{n+1} = h_0 \})   \\
	 \le
	  \left\{ \begin{array}{ll}
 C(r_0,\gamma,\alpha)[(\rho+1)m^3  + \rho^3 m^{\frac{-\alpha}{1-2\alpha}}+\rho^3 m +\rho^{\frac{7}{2}}\sqrt{m}]   & \mbox{for }n=3\\
 C(r_0,\gamma,\alpha)[(\rho+1)m^2  + \rho^4 \sqrt{m}(|\log m|+|\log\rho| +1)] & \mbox{for }n=4,
 \end{array}\right. 
 \end{gather*}
where the two graphs are chosen to have consistent orientations (pointing upward, say). 
\end{theorem}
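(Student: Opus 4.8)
The plan is to run the same Federer--Fleming current decomposition as in the proof of Theorem~\ref{th:geq5}, substituting the low-dimensional height estimate of Theorem~\ref{th:lower-dim-bound} for the bound of Theorem~\ref{th:maximum}. First I would reduce to the case that $\mathrm{graph}[f]$ has upward pointing mean curvature vector: by Theorem~\ref{th:mean-convexity} the graph is weakly mean-convex, so its mean curvature vector points consistently upward or downward, and reflecting $f\mapsto -f$ (which preserves scalar curvature and mass and reflects the plane $\{x^{n+1}=h_0\}$) reduces to the upward case. Then, exactly as in the proof of Theorem~\ref{th:geq5}, set $M=\mathrm{graph}[f]$, let $A$ be the sum (with downward orientation) of the horizontal-plane regions bounded by the components of $\partial M$, so that $M-A=\mathrm{graph}[\bar f]$, and write $B=B_++B_-$, where $B_+$ is the region of $\rr^{n+1}$ (positive orientation) lying above $\{x^{n+1}=h_0\}$ and below $M-A$, and $B_-$ the region (negative orientation) lying below $\{x^{n+1}=h_0\}$ and above $M-A$. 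Then $M-\{x^{n+1}=h_0\}=A+\partial B$ in $U$, and since $A,B_+,B_-$ are multiplicity-one submanifolds-with-boundary, $F_U(M-\{x^{n+1}=h_0\})\le \mathbf{M}_U(A)+\mathbf{M}_U(B_+)+\mathbf{M}_U(B_-)$.

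The term $\mathbf{M}_U(A)$ is handled verbatim as before: the isoperimetric inequality in $\rr^n$ and Lam's graphical Penrose inequality \cite{Lam} give $\mathbf{M}_U(A)\le c(n)|\partial\Omega|^{n/(n-1)}\le c(n)m^{n/(n-2)}$, which is $c\,m^3$ for $n=3$ and $c\,m^2$ for $n=4$. For $\mathbf{M}_U(B_-)$ I would again slice by horizontal hyperplanes: for a.e.\ $h\le h_0$ the slice $B_-\cap U\cap\{x^{n+1}=h\}$ lies in $\Omega_h$, hence has volume at most $c(n)V(h)^{n/(n-1)}$ by the isoperimetric inequality; since $V$ is nondecreasing (Lemma~\ref{pr:volume-increasing}) and $V(h_0)\le 2\omega_{n-1}(2m)^{(n-1)/(n-2)}$ by left lower semicontinuity of $V$ together with the definition of $h_0$, each slice has volume $\le c(n)m^{n/(n-2)}$, and integrating over the vertical extent of $U$ (length $2\rho$) gives $\mathbf{M}_U(B_-)\le c(n)\rho\, m^{n/(n-2)}$.

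The only place the low-dimensional hypotheses enter is the estimate of $\mathbf{M}_U(B_+)$. Projecting $B_+\cap U$ to the horizontal $\rr^n$, the fiber over a point $x$ in the radius-$\rho$ ball has length at most $(\bar f(x)-h_0)_+$, which by Theorem~\ref{th:lower-dim-bound} is bounded by $C(r_0,\gamma,\alpha)\big(m^{-\alpha/(1-2\alpha)}+m+\sqrt{m\rho}\big)$ for $n=3$ and by $C(r_0,\gamma,\alpha)\sqrt m\,(|\log m|+|\log\rho|+1)$ for $n=4$; since the horizontal projection of $U$ has volume $\le c(n)\rho^n$, this yields $\mathbf{M}_U(B_+)\le C\big(\rho^3 m^{-\alpha/(1-2\alpha)}+\rho^3 m+\rho^{7/2}\sqrt m\big)$ for $n=3$ and $\mathbf{M}_U(B_+)\le C\rho^4\sqrt m\,(|\log m|+|\log\rho|+1)$ for $n=4$. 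Adding the three contributions and collecting like terms produces exactly the asserted bounds.

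I do not expect a genuine obstacle in this argument: the substantive low-dimensional analysis has already been packaged into Theorem~\ref{th:lower-dim-bound} (through the strong-maximum-principle comparison with a translated Schwarzschild graph and the $(r_0,\gamma,\alpha)$-uniformity), so the remaining work is the bookkeeping above. The one point that requires a little care is justifying $V(h_0)\le 2\omega_{n-1}(2m)^{(n-1)/(n-2)}$, which is needed to control the slices of $B_-$, and verifying that $A$, $B_+$, $B_-$ are disjoint multiplicity-one pieces so that their masses in $U$ coincide with Lebesgue measures of intersections with $U$.
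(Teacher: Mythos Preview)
Your proposal is correct and follows essentially the same approach as the paper: you run the decomposition $M-\{x^{n+1}=h_0\}=A+\partial(B_++B_-)$ from Theorem~\ref{th:geq5}, keep the estimates for $\mathbf{M}_U(A)$ and $\mathbf{M}_U(B_-)$ unchanged, and replace the height bound for $\mathbf{M}_U(B_+)$ by the one from Theorem~\ref{th:lower-dim-bound}. The only refinement worth noting is that for the $B_-$ estimate you need only $V(h)\le 2\omega_{n-1}(2m)^{(n-1)/(n-2)}$ for $h<h_0$, which follows directly from the definition of $h_0$ as a supremum together with monotonicity of $V$, so the delicate question of the value $V(h_0)$ itself does not actually arise.
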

\begin{proof}
The proof is almost identical to the proof of Theorem \ref{th:geq5}. We define $A$, $B_+$, and $B_-$ as before and obtain the same estimates for $A$ and $B_-$ as follows:
\begin{align*}
	\mathbf{M}_U (A) \le c(n) m^{\frac{n}{n-2}}\quad \mbox{and} \quad
	\mathbf{M}_U(B_-) \le c(n) \rho m^{\frac{n}{n-2}}.
\end{align*}
The only difference is the estimate of $B_+$ that
\[
	M_U(B_+) \le c(n) \rho^n \sup_{B_{\rho}} (\bar{f} - h_0).
\]
In Theorem  \ref{th:geq5}, we used the fact that $\bar{f}-h_0 \le c(n)m^{\frac{1}{n-2}}$. For the lower dimensional case, we simply replace this bound by the bound on $\bar{f}-h_0$ from Theorem \ref{th:lower-dim-bound}.
\end{proof}

\begin{proof}[Proof of Theorem \ref{th:main}]
Choose a sequence $M_i$ as in the hypotheses of  Theorem \ref{th:main}, where the ``vertically normalized'' assumption is the assumption that $h_0=0$. Let $\Pi$ be the plane $\{x^{n+1}=0\}$. In order to prove that $M_i$ weakly converges to $\Pi$ in the sense of currents, let $\omega$ be a smooth compactly supported $n$-form. We must show that $(M_i-\Pi)(\omega)\to0$.

Choose $A_i$ and $B_i$ as in the proof of Theorem  \ref{th:geq5}. \[ (M_i -\Pi)(\omega) = (A_i+\partial B_i)(\omega) = A_i(\omega) + B_i(d\omega).\]
Since $\omega$ is supported in some ball $U$ of radius $\rho$, the estimates of Theorems~\ref{th:geq5} and \ref{th:lower-dimension} show that $\mathbf{M}_U(A_i)$ and $\mathbf{M}_U(B_i)$ approach zero as the masses of the $M_i$'s approach zero.  The result follows.
\end{proof}

\bibliographystyle{amsplain}
\bibliography{2014references}

\end{document}